\numberwithin{equation}{section}
\newcommand{\pd}[2]{\dfrac{\partial{#1}}{\partial{#2}}}  
\newcommand{\N}{\mathbb{N}}
\newcommand{\Q}{\mathbb{Q}}
\newcommand{\R}{\mathbb{R}}
\newcommand{\Z}{\mathbb{Z}}
\renewcommand{\a}{\alpha}
\renewcommand{\b}{\beta}
\newcommand{\g}{\gamma}
\renewcommand{\d}{\delta}
\newcommand{\e}{\epsilon}
\renewcommand{\t}{\tau}
\newcommand{\1}{\mathbf{1}}
\newcommand{\0}{\mathbf{0}}
\newcommand{\av}[1]{\left|{#1}\right|}
\newcommand{\norm}[1]{\left\|{#1}\right\|}
\renewcommand{\l}{\left}
\renewcommand{\r}{\right}
\newcommand{\be}{\begin{enumerate}}
\newcommand{\bi}{\begin{itemize}}
\newcommand{\ee}{\end{enumerate}}
\newcommand{\ei}{\end{itemize}}
\renewcommand{\P}{\mathbb{P}}
\newcommand{\K}{\mathcal{K}}
\newcommand{\ba}{\boldsymbol{\alpha}}
\newcommand{\br}{\boldsymbol{\rho}}
\newcommand{\ehat}{\widehat{e}}
\newcommand{\Spec}{\mathrm{Spec}}
\newcommand{\mcL}{\mathcal{L}}
\newcommand{\mcM}{\mathcal{M}}
\newcommand{\rmin}{\rho_{\mathsf{{min}}}}
\newcommand{\mM}{\mathbf{M}}
\newcommand{\mI}{\mathbf{I}}
\newcommand{\mD}{\mathbf{D}}
\newcommand{\vc}{\mathbf{c}}
\newcommand{\ve}{\mathbf{1}}
\newcommand{\vd}{\mathbf{d}}
\newcommand{\beps}{\boldsymbol{\epsilon}}
\newcommand{\D}[1]{D^{{#1}}}
\newcommand{\DK}[1]{D^{K,{#1}}}
\newcommand{\DL}[2]{D_\mcL^{{#1},{#2}}}
\newcommand{\DLK}[2]{D_\mcL^{K,{#1},{#2}}}
\newcommand{\DG}[2]{D_G^{{#1},{#2}}}
\newcommand{\DGK}[2]{D_G^{K,{#1},{#2}}}
\newcommand{\pDG}[2]{\partial D_G^{{#1},{#2}}}
\newcommand{\pDGK}[2]{\partial D_G^{K,{#1},{#2}}}
\newcommand{\hds}[3]{\xi^{{#1},{#2},{#3}}}
\newcommand{\hdsK}[3]{\xi^{K,{#1},{#2},{#3}}}
\newcommand{\hdscoord}[5]{\xi^{{#1},{#2},{#3}}_{{#4},{#5}}}
\newcommand{\G}[2]{G^{{#1},{#2}}}
\newcommand{\GK}[2]{G^{K,{#1},{#2}}}
\newcommand{\Gcoord}[4]{G^{{#1},{#2}}_{{#3},{#4}}}
\newcommand{\GKcoord}[4]{G^{K,{#1},{#2}}_{{#3},{#4}}}
\newcommand{\sstar}[1]{s_\star^{{#1}}}
\newcommand{\sstarK}[1]{s_\star^{K,{#1}}}
\newcommand{\psimap}[1]{\psi^{{#1}}}
\newcommand{\psimapK}[1]{\psi^{K,{#1}}}
\newcommand{\F}[1]{F^{{#1}}}
\newcommand{\HH}[3]{\mathcal{H}^{{#1},{#2},{#3}}}
\newcommand{\smin}{s_\star}
\newcommand{\xiattr}[3]{\xi_\star^{{#1},{#2},{#3}}(t)}
\newcommand{\nstar}[1]{n_\star({{#1}})}
\newcommand{\xeq}[2]{x_{\mathsf{EQ}}^{{#1},{#2}}}
\newtheorem{remark}[theorem]{Remark}
\title{Synchrony and Periodicity in an Excitable Stochastic
  Neural Network with multiple subpopulations} 
\author{Lee DeVille\thanks{Department of Mathematics, University of
    Illinois at Urbana Champaign, 1409 W. Green Street, Urbana,
    Illinois, 61801 (\tt {rdeville@illinois.edu}).}\and Yi
  Zeng\thanks{Department of Mathematics, Massachusetts Institute of
    Technology, 77 Massachusetts Avenue, Cambridge, Massachusetts
    02138 \tt(yizeng@mit.edu).}}
\begin{document}
\maketitle


\begin{abstract}
  We consider a fully stochastic excitatory neuronal network with a
  number of subpopulations with different firing rates.  We show that
  as network size goes to infinity, this limits on a deterministic
  hybrid model whose trajectories are discontinuous. The jumps in the
  limit correspond to large synchronous events that involve a large
  proportion of the network.  We also perform a rigorous analysis of
  the limiting deterministic system in certain cases, and show that it
  displays synchrony and periodicity in a large region of parameter
  space.
\end{abstract}
\begin{keywords}
  stochastic neuronal network, contraction mapping, mean-field limit,
  critical parameters
 \end{keywords}

\begin{AMS}
05C80, 37H20, 60B20, 60F05, 60J20, 82C27, 92C20
\end{AMS}

\section{Introduction}

The study of oscillator synchronization has made a significant
contribution to the understanding of the dynamics of real biological
systems~\cite{Buck.Buck.68, MS90, Liu.Hui.07, Peskin.75,
  Guevara.Glass.82, Glass.etal.87, Czeisler.etal.80, Tyson.etal.99,
  Ermentrout.Rinzel.96, Kapral.Showalter.book, Tyson.Keener.88,
  Cartwright.etal.99, Hansel.Sompolinsky.92, Pakdaman.Mestivier.04},
and has also inspired many ideas in modern dynamical systems theory.
See ~\cite{Sync.book,PRK.book,Winfree.book} for reviews.

The prototypical model in mathematical neuroscience is a system of
``pulse-coupled'' oscillators, that is, oscillators that couple only
when one of them ``fires''.  More concretely, each oscillator has a
prescribed region of its phase space where it is active, and only then
does it interact with its neighbors.  There has been a large body of
work on deterministic pulse-coupled networks~\cite{Kuramoto.91, GH93,
  TMS93, VAE94, BC98, TKB98, VS98, CWJ99, Knight.72, Peskin.75, MS90,
  SU00}, mostly studying the phenomenon of synchronization on such
networks.

In~\cite{BMB-DP, MMNP-DPS}, the first author and collaborators
considered a specific example of a network containing both
refractoriness and noise; the particular model was chosen to study the
effect of {\em synaptic failure} on the dynamics of a neuronal
network.  What was observed in this class of models is that when the
probability of synaptic success was taken small, the network looked,
more or less, like a stationary process, with a low degree of
correlation in time; when the probability of synaptic success was
taken large, the system exhibited synchronous behavior that was close
to periodic.  Both of these behaviors are, of course, expected: strong
coupling tends to lead to synchrony, and weak coupling tends not
to. The most interesting observation was that for intermediate values
of the coupling, the network could support both synchronized and
desynchronized behaviors, and would dynamically switch between the
two.

The main mathematical results of~\cite{MMNP-DPS} explained this
phenomenon in two ways: it first showed that in the limit
$N\to\infty$, the dynamics of the neuronal network limited onto a
deterministic dynamical system.  (What was unusual for this model was
that the deterministic system was a {\em hybrid} system: a system of
an continuous flow coupled to a map of the phase space.  The effect of
this system is to have continuous trajectories which jump at
prescribed times.)  The second part of the result was to study the
dynamics of this hybrid system, and show that in certain parameter
regimes the deterministic system was {\em multistable}, i.e. had
multiple attractors for the dynamics.  Putting these two together
explained the switching behavior observed in the finite $N$ model,
since the stochastic system would switch, on long times scales,
between the attractors that exist in the $N\to\infty$ limit.

In this paper, we consider an extension of the model where we allow
for several independent subpopulations with different intrinsic firing
rates.

\begin{figure}[ht]
\begin{centering}
  \includegraphics[width=1.0\columnwidth]{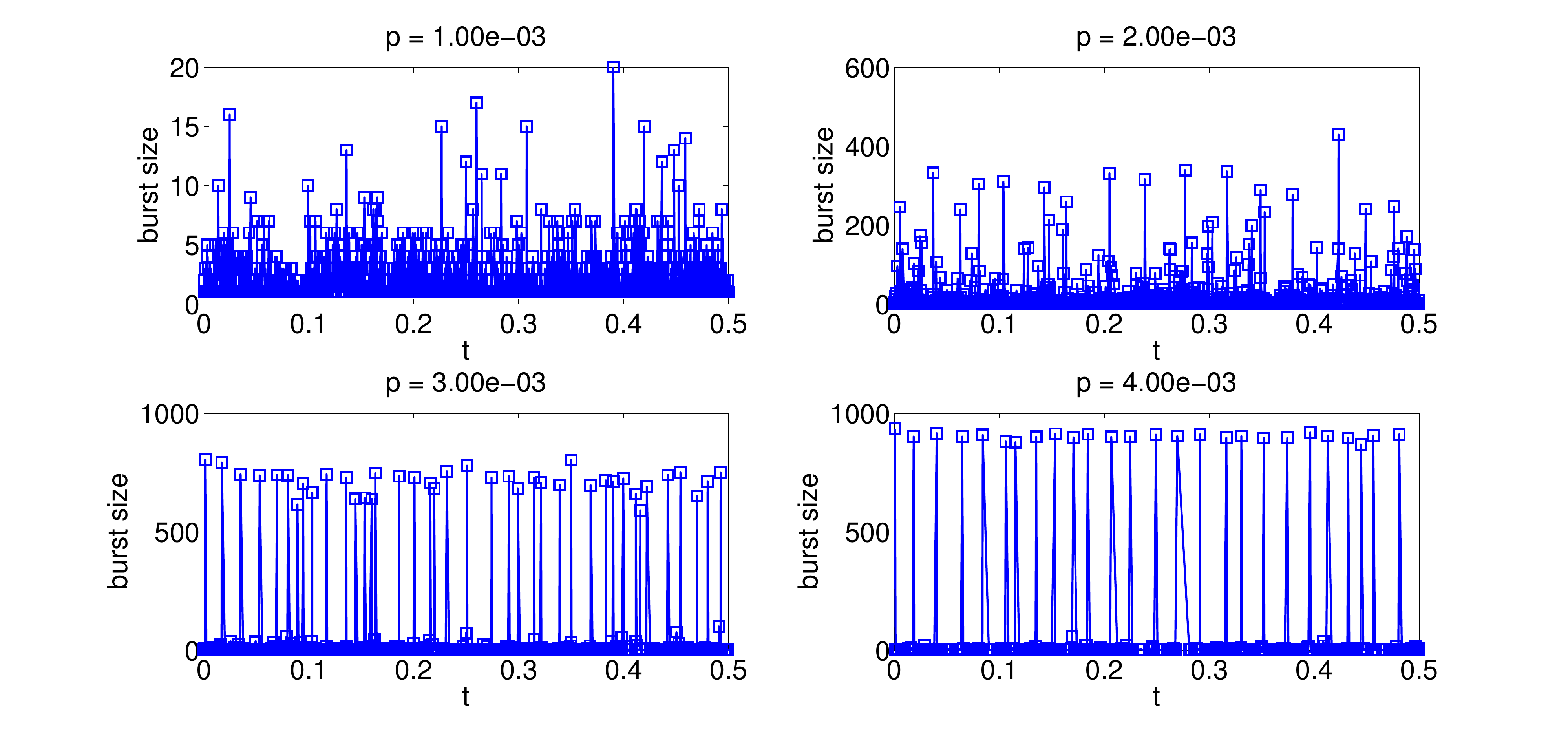}
  \caption{Different behaviors of the model.  We fix $M=10$, $N=1000$,
    and plot different dynamics of the model that correspond to
    different $p$.  As we increase $p$, we see the change from
    asynchronous and irregular behavior to synchronous and periodic
    behavior.}
  \label{fig:trace}
\end{centering}
\end{figure}

\section{Stochastic Model}

Our model is a stochastic neuronal network model with all-to-all
excitatory coupling whose details we elucidate in this section.  We
first describe the dynamics of a finite network, then describe the
``mean field'' limit of the network as we take the network size to
infinity.

\begin{figure}[ht]
\begin{centering}
  \includegraphics[width=0.6\columnwidth]{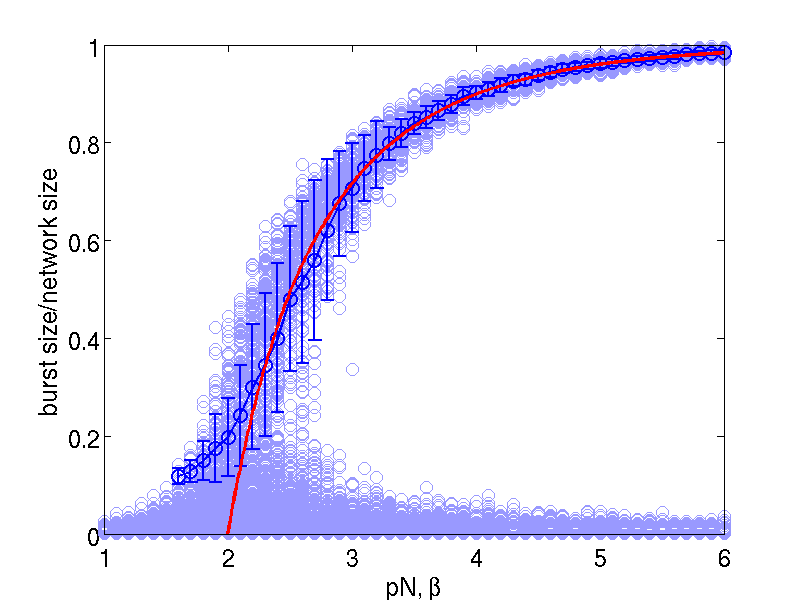}
  \caption{The meaning of the blue data: we fix a choice of $\ba$,
    $K=2$, and $N=1000$, run the stochastic neuronal network studied
    in this paper, and plot the burst sizes in light blue.  For $p$
    large enough, we also plot the mean and standard deviations of the
    burst sizes for all of the bursts larger than one-tenth the size
    of the network. In red, we plot the deterministic burst size (as a
    proportion of network size) in the deterministic limit defined in
    Section~\ref{sec:K2} below --- the main result of
    Section~\ref{sec:K2} is that the stochastic (blue) system limits
    to the deterministic (red) system as $N\to\infty$.}
  \label{fig:bvp}
\end{centering}
\end{figure}

\subsection{Fixed network size $N$} 

The state of a single neuron is given by its voltage.  We assume here
and throughout that the voltage can only take one of a finite number
of levels (see~\cite{BMB-DP} for a justification of this, but, in
short, it is a reasonable modeling assumption if we assume that the
neurons' leak is sufficiently small).  We denote these levels by $\K =
\{0,1,\dots,K-1\}$, and throughout $K$ will be the number of voltage
levels possible for each neuron.  We will assume that the network has
$N$ neurons, and thus the entire state space of the network can be
represented by $V_t = \{V_{n,t}\}_{n=1}^N \in \K^N$.  

If a neuron is ever promoted to level $K$ it is said to ``fire''.  The
effect of neuron $i$ firing is that it potentially raises the level of
every other neuron in the network; more precisely, when neuron $i$
fires it promotes neuron $j$ one level with probability $p_{ij}$, if
$j$ has not already fired.  It is clear that any one neuron firing can
lead to multiple other neurons firing, so that there can be a
``cascade'' of neuronal firings all initiated by the firing of a
single neuron.  These bursting dynamics are exactly the same as
in~\cite{MMNP-DPS}, and we describe these now.

\subsubsection{Bursting dynamics}

We now give a precise description of a burst, which we will denote by
the random partial function $B\colon\K^n\to\K^n$.  

We add two ``virtual states'' to $\K$, denoted by $Q,P$, where $Q,P$
can be thought of as a ``queue'' and as ``processed'' neurons,
respectively.  Then, let us assume that there exists a $t>0$ such that
$V_{i,t} = K$ for a single $i$.  Define the initial sets as:
\begin{equation*}
  S_{k,0} = \{j: V_{j,t} = k\},\quad Q_0 = \{i\}, \quad P_0 = \{\}.
\end{equation*}
For $i,j\in[N]$ and $u\in\N$, define the Bernoulli random variables
$\eta_{i,j}^{(u)}$ where $\P(\eta_{i,j}^{(u)} = 1) = p_{ij}$,
independently.  For each $n$, define 
\begin{equation*}
  \zeta_j^{(u)} = \sum_{i\in Q_n} \eta_{ij}^{(u)},
\end{equation*}
which represents the number of kicks that neuron $j$ receives from
neurons currently in the queue.  We then define
\begin{align*}
  S_{k,u+1} &= \bigcup_{\ell = 0}^k \{j: j\in S_{k-\ell,u} \wedge \zeta_j^{(u)}=\ell\},\quad
  Q_{u+1} = \bigcup_{\ell=0}^N \{j: j\in S_{K-\ell,u}\wedge \zeta_j^{(u)}\ge \ell\},\\
  P_{u+1} &= Q_n.
\end{align*}
In words, we promote every neuron up $\zeta_j^{(u)}$ steps, unless
this number is large enough to bring the neuron to level $K$ or above.
If this happens, we put the neuron in the queue, and we move every
neuron currently in $Q$ to $P$.

It is clear from this definition that if $Q_u = \emptyset$, then the
process stops evolving, since $\zeta_j^{(u)} = 0$ for all $j$.  Define
$u^* = \inf_{u>0} Q_u = \emptyset$ and only evolve the process for
$u=1,\dots, u^*$.  We then define
\begin{equation*}
  S_{0,u^*+1} = S_{0,u^*} + P_{u^*},\quad S_{k,u^*+1} = S_{k,u^*}, k>0,
\end{equation*}
i.e. at the end of the burst, we place all of the processed neurons
back at level 0.  We then define the map $B(V_t) =
\{B_i(V_t)\}_{i=1}^N$ by
\begin{equation*}
  B_i(V_t) = k\Leftrightarrow i\in S_{k,u^*+1}. 
\end{equation*}
This is the definition if a single component of $V_t$ is at level $K$.
If all of the components of $V_t$ are $\le k$, then we define
$B(\cdot)$ to be the identity.  If more than one component of $V_t$ is
at level $K$, we say that $B$ is undefined.  (Of course, it is clear
how we should define $B(\cdot)$ on this set, but we will see below
that this state can never occur in our dynamics, so we say that $B$ is
undefined on this state to stress this.)


\subsubsection{Non-bursting dynamics}

Here we specify what happens to the network between the bursts.  This
is where we differ from the model of~\cite{BMB-DP,MMNP-DPS}.

Choose $\br = \{\rho_n\}_{n=1}^N$, with $\rho_n>0$, and we assume that
neuron $n$ is stimulated by a exogeneous forcing of rate $\rho_n$.
More precisely, given the state $V_t$, choose $n$ independent
exponential random variables $T_n$, define $U_n = T_n/\rho_n$, and let
$n^* = \inf_{n\in[N]} U_n$.  We then say that $V_t$ is defined to be
constant on $[t,t+U_n)$, and
\begin{equation*}
  V_{t+U_{n^*}} = B(V_t + \ehat_{n^*}),
\end{equation*}
where $\ehat_k$ is the vector with a one in the $k$th slot and zero
elsewhere.  

In words, what we do is promote the level of $V_{n^*,t}$ by one level
and leave all the other neurons alone.  Recall that by the definition
of $B(\cdot)$ above, if the neuron that we have just promoted did not
reach level $K$, then we do nothing else.  If it did, then we compute
the random map $B(\cdot)$ as described above.

From this and some elementary results for Markov
chains~\cite{Norris.book}, we see that this defines a continuous-time
Markov chain for all $t>0$, once we have specified $V_0$.  It follows
from the various theorems of~\cite{Norris.book} that $n^*$ is unique,
and $U_{n^*}<\infty$ with probability one.  Moreover, by construction,
$V_t$ is a c\'{a}dl\'{a}g process, i.e. a stochastic process that is
right-continuous with countably many discontinuities.

Many readers might be familiar more with an alternative description of
the process above, where we could have said that, given the state
$V_t$ and $\Delta t \ll 1$, the probability of any neuron being
promoted is $\Delta t\cdot \sum_{n=1}^N\rho_n$, and, given that a
neuron is promoted, the probability that it is neuron $i$ is given by
$\rho_i / \sum_{n=1}^N\rho_n$, and these two are independent.  (This
is the formulation of the process sometimes called the ``Gillespie
method'' or the ``SSA method''~\cite{Gillespie.76, Gillespie.77,
  Gillespie.77.2, Gillespie.78} and it is well-known that this
definition gives rise to the same stochastic process once it is made
sufficiently precise.)

\subsection{Mean-field ($N\to\infty$) limit}

The stochastic process defined above is perfectly well-defined, but of
course for finite $N$, and for various heterogeneous choices of
$p_{ij},\rho_j$, the dynamics of this system can be quite complicated.
As is common in general in the theory of dynamical systems on
networks~\cite{HNT01,ALT06,Barrat.Berthelemy.Vespignani.book,
  Draief.Massoulie.book}, we seek to consider the limit as
$N\to\infty$ and hope that a simpler ``network level'' description can
be made.

To make the limit well-defined, we need some sort of assumptions
regarding the sets $\{p_{ij}\}_{i,j=1}^N$ and $\{\rho_n\}_{n=1}^N$.
In~\cite{BMB-DP, MMNP-DPS}, this model was considered under the
assumption that $p_{ij} = p$ and $\rho_n = \rho$, i.e. the most
homogeneous possible example was considered. It was further assumed
that $p$ satisfied the scaling law $pN \to\beta$, where $\beta>0$ is
fixed.

In this paper, we will consider the following generalization: For each
$N$, we define a partition of $N$ into $M$ disjoint sets, denoted by
$\{A_m^{(N)}\}_{m=1}^M$, and we assume that $\rho$ is constant on each
of these subsets.  We will abuse notation slightly and denote the rate
on subset $A_m^{(n)}$ as $\rho_m$.  We will then consider the limit
where $N\to\infty$ where we assume that each subpopulation scales
proportionally in the limit, i.e. we choose $\ba =
\{\a_{m}\}_{m=1}^M$, and $\br = \{\rho_m\}_{m=1}^M$, with
\begin{equation}\label{eq:defofba}
  0 < \alpha_m < 1,\quad \sum_{m=1}^M \alpha_m = 1, \quad \rho_m>0,
\end{equation}
and we assume that $||A_m^{(N)}| - \alpha_m N| < 1$ for all $m$.  (Note that
$\alpha_m N$ is not in general an integer, but we assume that $|A_m^{(N)}|$
is as close to this number as possible.)

We will then take the limit as $N\to\infty$, $pN\to\beta>0$, and
$A_m^{(N)}$ scaling as described above.  The case where $M=1$ is when
all neurons have the same rate, and is thus equivalent to the model
studied in~\cite{MMNP-DPS}.

\subsubsection{Definition of mean-field limit}

The convention that we will use below is as follows: we will always
use capital roman letters for stochastic processes, and small Greek
letters for deterministic processes.  Moreover, superscripts will
always correspond to parameters and subscripts with always correspond
to coordinates (or time in the case of as stochastic process).

To define the mean-field limit, let us consider the stochastic process
defined above with fixed $N,K,p,\ba,\br$, and let us define the
auxiliary process $X$ defined by
\begin{equation*}
  X^{(N,K,p,\ba,\br)}_{k,m,t} = \# \{n: V_{n,t} = k\ \&\ n\in A_m^{(N)}\},
\end{equation*}
i.e. $X_{k,m,t}$ counts the number of neurons at level $k$ in the set
$A_m^{(N)}$.  It is not difficult to see that all of the information
needed to evolve the process is contained in the $X$'s.  Notice that
$X_{k,m,t}\in\Z^{KM}$ for each $t$, and we index it by $(k,m)$.

We now define a deterministic {\em hybrid} dynamical system.  What
makes this system a hybrid system is that there will be two
evolutionary rules for the process defined on two different parts of
the phase space.

\begin{definition}
  Let $\ba$ satisfy~\ref{eq:defofba}.  We define 
  \begin{equation*}
    \DK \ba := \left\{x = \{x_{k,m}\}_{k,m}\in\R^{KM} : \sum_{k=0}^{K-1} x_{k,m} = \alpha_m\right\},
  \end{equation*}
  and  write $\DK\ba$ as the disjoint union $\DK\ba = \DLK
  \ba\b \dot\cup \DGK \ba\b$, where
\begin{equation*}
  \DGK\ba\b := \left\{x\in \DK\ba : \sum_{m=1}^M x_{K-1,m} \ge \frac 1\b\right\}, \quad \DLK\ba\b=\DK\ba\setminus\DGK\ba\b.
\end{equation*}
We will also write $\pDGK\ba\b$ for the set of $x$ with $\sum_{m=1}^M x_{K-1,m} = \b^{-1}$.
\end{definition}

\begin{definition}\label{def:mf}
  We now define a deterministic hybrid dynamical system
  $\hdsK\ba\br\b(t)$ with state space $\DK\ba$.  The system will be
  hybrid since it will have two different rules on  two different parts of the phase space.
%

  $\bullet$ if $\xi\in \DLK \ba\b$, i.e. $\sum_{m=1}^M \xi_{K-1,m}(t) <
  \beta^{-1}$, then we flow by
  \begin{equation}\label{eq:mfODE}
    \frac{d}{dt} \xi_{k,m}(t) = \rho_m \mu(\xi) (\xi_{k-1,m}(t)-\xi_{k,m}(t)),
  \end{equation}
  where $\mu(\xi)$ is a {\em scalar} function that we define below and
  we interpret the index modulo $K$; more specifically, if we define
  the matrix $\mcL$ by
  \begin{equation}\label{eq:defofL}
    \mcL_{(k,m),(k',m')} = \begin{cases} -\rho_m, & k=k',m=m',\\ \rho_m,& k' = k+1\pmod K, m'=m,\\ 0,&\mbox{otherwise.}\end{cases} 
  \end{equation}
  then on $\DLK\ba\b$ we flow according to $\dot \xi = \mu(\xi)\mcL x$.
  Notice that since $\mu(\xi)$ is scalar, the trajectories of the flow
  coincide with the trajectories of the flow $\dot \xi = \mcL \xi$ and
  differ only up to a time change.  (Ergo, if we are interested in the
  trajectories of the system we can ignore $\mu(\xi)$ altogether.)

  $\bullet$ We now define a map $\GK\ba\b$ with domain $\DGK\ba\b$.
  We first define
  \begin{equation*}
    \psimapK \b(\xi,s) := -s + \sum_{i=1}^K\sum_{m=1}^M \xi_{K-i,m} \left(1-\sum_{j=0}^{i-1} \frac{s^j\b^j}{j!}e^{-s\b}\right)
  \end{equation*}
  and
  \begin{equation*}
    \sstarK \b(\xi) = \inf_{s>0} \{s\colon\psimap K \b(\xi,s) = 0\}.
  \end{equation*}
  Let us now index $\R^{MK+1}$ by $(k,m)$ with $k\in\K$ and $m\in[M]$,
  plus the state $Q$, and we define the matrix $\mcM$ whose components
  are given by
  \begin{equation*} 
    \mcM_{z,z'} = \begin{cases} -1,& z=(k,m),z'=(k',m'),k=k',m=m',\\ 1,&z=(k,m),z'=(k',m'),k'=k+1,m'=m,\\1,&z=(K-1,m),z'=Q\\0,&\mbox{else}.\end{cases}
  \end{equation*}

We then define $\GK \ba \b$ componentwise by
\begin{equation}\label{eq:mfmap}
\begin{split}
  \GKcoord\ba\b k m(\xi) &=   (e^{\b s^*\mcM}\xi)_{k,m} ,\quad k=1,2,\dots,K-1,\\
  \GKcoord\ba\b0m(\xi) &= \alpha_m - \sum_{k=1}^K(e^{\b s^*\mcM}\xi)_{k,m}.
\end{split}
\end{equation}
(The final condition guarantees that $\GK\ba\b(\xi)\in\DK\ba$.  The
interpretation of this is that we redistribute all of the neurons that
have fired back to level 0, and we do so in such a way to conserve the
total number of neurons in each subpopulation.)

$\bullet$ Finally we combine these to define the hybrid system for all
$t>0$.  Fix $K,\ba,\br,\b$.  Assume $\xi(0)\in D_\mcL$, and define
\begin{equation}\label{eq:defoftau1}
  \t_1 = \inf_{t>0} \{e^{t\mcL}\xi(0) \in D_G\}.
\end{equation}
We then define 
\begin{equation*}
  \xi(t) = e^{t\mcL}\xi(0)\mbox{ for }t\in[0,\t_1),\quad \xi(\t_1) = G(e^{\t_1\mcL}\xi(0)).
\end{equation*}
(Of course, it is possible that $\t_1 = \infty$, in which case we have
defined the system for all positive time, otherwise we proceed
recursively.)  Now, given $\t_n<\infty$ and $\xi(\t_n)\in D_\mcL$,  
define
\begin{equation}\label{eq:defoftaun1}
  \t_{n+1} = \inf_{t>\t_n} \{e^{(t-\t_n)\mcL}\xi(\t_n) \in D_G\},
\end{equation}
and 
\begin{equation*}
  \xi(t) = e^{(t-\t_n)\mcL}\xi(\t_n)\mbox{ for }t\in[\t_n,\t_{n+1}),\quad \xi(\t_{n+1}) = G(e^{(\t_{n+1}-\t_n)\mcL}\xi(\t_n)).
\end{equation*}
If $\t_n = \infty$ then we define $\t_{n+1} = \infty$ as well.  We
call the times $\t_1,\t_2,\dots$ the {\bf big burst times}, and we call
$\sstarK\b(\xi(\t_n))$ the {\bf size of the big burst}.
\end{definition}

\begin{remark} 
  We note that the definition given above is well-defined and gives a
  unique trajectory for $t\in[0,\infty)$ if and only if we know that
  $G(\xi)\in D_\mcL$ for any $\xi\in D_G$.  We will show below that
  this is the case.  We will also see below that some trajectories
  have infinitely many big bursts, and some have finitely many---this
  depends both on parameters and initial conditions.
\end{remark}

\subsubsection{Intuition behind definition}\label{sec:intuition}

This is no doubt a complicated description, but all of the pieces of
this definition can be well-motivated.  The way to think of this model
is it is either ``subcritical'' or ``supercritical'': when the model
is subcritical, we move according to the flow $e^{t\mcL}$, and when it
is supercritical, we apply the map $G(\cdot)$.  The third part of the
definition is the way the two pieces are stitched together, and looks
a bit complex, but basically boils down to ``flow until the trajectory
hits a distinguished set; when it does, apply the map $G(\cdot)$, and
if it never does, just flow forever''.

The criticality parameter is the sum $\sum_{m=1}^M x_{K-1,m}$, which
is the number of neurons at level $K-1$ across all subpopulations.
The criticality threshold is $\b^{-1}$.  To see why this should be so,
we can think of a branching process description of the growth of the
queue.  Note that all neurons act the same (the only difference
between different subpopulations is the rate $\rho_m$, which only
affect interburst dynamics).  Also note that every time we process a
neuron in the firing queue, it will promote, on average $px_{K-1,m} N$
neurons from state $x_{K-1,m}$ up to firing.  In this scaling, this is
$\beta x_{K-1,m}$, and thus the mean number of children that each
firing event creates is $\beta \sum_{m=1}^M x_{K-1,m}$.  Thus the
critical threshold is whether this number is less than, or greater
than, unity.

When the system is subcritical, whenever a neuron fires, the size of
the burst that it generates will be $O(1)$.  In the scaling where all
events are $O(1)$ inside of a $O(N)$ network, the Markov chain will be
well-approximated by the mean-field differential
equation~\cite{Kurtz.72, SW}, and this is exactly the ODE given in the
first part above.  If we assume that $\mu(x)$ is the mean size of a
burst in this regime (again recalling that it is $O(1)$), then it is
plausible that we should imagine a ``flux'' ODE where the rate at
which neurons leave a state is proportional to the number in the
state, and the rate at which they enter a state is proportional to the
size of the bin corresponding to neurons with voltage one level down,
and this is~\eqref{eq:mfODE}.

When the system is supercritical, there is a positive probability of a
burst taking up $O(N)$ neurons, or, an $O(1)$ proportion of the entire
network.  The description above is meant to capture the size of this
burst, $\sstarK\b N$, and its effect.  To understand this description,
let us consider the case where we have processed $sN$ neurons,
i.e. each neuron in the network has been given $sN$ possibilities to
be promoted, each with probability $p$.  Thus, each neuron in the
network will have receive a number of kicks given by the binomial
random variable with $sN$ trials each of which with $\beta/N$
probability of success, and it is well known that in the limit as
$N\to\infty$, this binomial converges to $Po(s\b)$, a Poisson random
variable with mean $s\b$.  If we rewrite the definition of
$\psimapK\b$ in this light, we have
\begin{equation*}
  \psimapK\b(x,s) := -s + \sum_{i=1}^K\sum_{m=1}^M x_{K-i,m} \P(Po(s\b) \ge  i),
\end{equation*}
so we see that $\psimapK\b(t)$ is the expected proportion of neurons
in the queue at the time when $sN$ neurons have been processed, and
thus $\sstarK\b$ is the first time that this is zero.  Of course, this
is only an expectation, but in fact one can show that this (random)
$\sstarK\b$ satisfies a large deviation principle and $o(1)$ far away
from its mean in this scaling.  Moreover, the map $G$ is the expected
value of the system after a burst of size $\sstarK\b$; to see this,
notice that the components of $\b \mcM$ describes the average effect of
processing one neuron from the queue.

Said another way, consider the matrix exponential $e^{\b s \mcM}$ in
the definition.  This is, of course, the solution of the differential
equation $d\xi/ds = \b \mcM \xi$. One can compute explicitly that,
given an initial condition $\xi(0)$, the solution of this ODE
satisfies
\begin{equation*}
  \xi_{k,m}(s) = e^{-s\b} \sum_{j=0}^k \frac{s^j\b^j}{j!} \xi_{k-j,m}(0),
\end{equation*}
and similarly that 
\begin{equation*}
  \xi_Q(s) = \sum_{i=1}^k \sum_{m=1}^M \xi_{K-i,m}\left(1-\sum_{j=0}^{i-1}\frac{s^j\b^j}{j!}e^{-s\b}\right) = \psi_{\b,K}(\xi(0),s) + s.
\end{equation*}
Another interpretation of this definition is that we consider the flow
where mass is moving from $\xi_{k,m}$ to $\xi_{k+1,m}$ with rate $\b$,
for all $k=0,1,\dots,K-2$, and the mass from each $\xi_{K-1,m}$ is
moving to $Q$ with rate $\b$ as well.  If we then assume that mass is
leaving $Q$ at rate $1$, then $\psimapK\b(s)$ is exactly the size of
$Q$, and thus $\sstarK\b$ is the time $s$ at which $\xi_Q(s) = s$
under the $\mcM$ flow; said another way, if the mass is leaking out of
the queue at rate one, then this would be the time when the queue is
first zero.

From this argument, it is not hard to see that if $x\in\DG\ba\b$, then
$G(x)\in\DL\ba\b$.  To see this, note that since $\xi_Q(s)>s$ for some
$s>0$, at the first time when $\xi_Q(s) =s$, we must have
\begin{equation*}
  1 > \frac{d}{ds}\xi_Q(s) = \b\sum_{m=1}^M z_{K-1,m}.
\end{equation*}

It is worth noting that the mean-field system is discontinuous
whenever the map $G_\b$ is applied and continuous otherwise.  Since
this discontinuity refers to the instantaneous change in the network
when a very large event effects a significant proportion of the
network, we will also call these discontinuities ``big bursts''. We
will see below that there are some initial conditions that lead to
infinitely many big bursts, and others that lead to only finitely many.

This process is a generalization of, and quite similar to, the more
homogeneous process considered in~\cite{MMNP-DPS}; readers more
interested in the intuition behind this mean-field definition can read
Section 3 of that paper.

\subsubsection{Convergence Theorem for mean field limit}

In this section, we give a precise statement of the convergence
theorem of the stochastic neuronal network to the mean-field limit.
In the interests of space, we do not give a full proof of the theorem
here, but refer the reader to~\cite{MMNP-DPS}; it is not difficult to
see that the same proof as given there will follow with minimal
technical changes.

\begin{theorem}\label{thm:1}
  Consider any $x\in D_{K,\ba}\cap \Q^{KM}$.  For $N$ sufficiently
  large, $Nx$ has integral components and we can define the neuronal
  network process $X^{N,K,\ba,\br,p}_t$ as above, with initial
  condition $X_0^{N,K,\ba,\br,p} = Nx$.

  Choose and fix $\e,h,T>0$.  Let $\hdsK\ba\br\b(t)$ be the solution
  to the mean-field defined in Definition~\ref{def:mf} with initial
  condition $\hdsK\ba\br\b(0)=Nx$.  Define the times $\t_1,\t_2,\dots$
  at which the mean field is discontinuous, and define $b_{min}(T) =
  \min \{\sstarK\b(\xi(\t_k)):\t_k < T\}$, i.e. $b_{min}$ is the size
  of the smallest big burst which occurs before time $T$, and let
  $m(T) = \arg\max_k \t_k<T$, i.e. $m(T)$ is the number of big bursts
  in $[0,T]$.

  Pick any $\g<b_{min}(T)$.  For the stochastic process
  $X^{N,K,\ba,\br,p}_t$ and denote by $T^{(N)}_k$ the (random) times
  at which the $X^{N,K,\ba,\br,p}_t$ has a burst of size larger than
  $\g N$.  Then there exists $C_{0,1}(\e)\in[0,\infty)$ and
  $\omega(K,M)\ge 1/(M(K+3))$ such that for $N$ sufficiently large,
  \begin{equation}\label{eq:thm1.bt}
    \P\l(\sup_{j=1}^{m(T)} \av{T^{(N)}_j - \t_j} > \e\r) \le C_0(\e) Ne^{-C_1(\e) N^{\omega(K,M)}}.
  \end{equation}
  Moreover, if we define $\mathcal{T} := ([0,T] \setminus
  \cup_{j=1}^{m(T)} (T_j^{(N)}-\e,T_j^{(N)}+\e))$, and
  \begin{equation*}
    \varphi(t) = t-(T_j^{(N)}-\t_j)\mbox{ where }j=\max \{k\colon\tau_k<t\},
  \end{equation*}
  then
  \begin{equation}\label{eq:thm1.est}
    \P\l(\sup_{t\in\mathcal{T}} \av{N^{-1}X^{N,K,\ba,\br,p}_t - \hdsK\ba\br\b(\varphi(t))} > \e\r) \le C_0(\e) Ne^{-C_1(\e) N^{\omega(K,M)}}.
  \end{equation}
\end{theorem}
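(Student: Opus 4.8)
The plan is to exploit the two-regime structure of the hybrid limit and to reduce, modulo bookkeeping for the $M$ subpopulations, to the single-population argument of~\cite{MMNP-DPS}. The skeleton is: on each interburst interval the rescaled chain $N^{-1}X_t$ tracks the flow $e^{t\mcL}\xi$; at each transversal crossing of $\pDGK\ba\b$ a single macroscopic cascade occurs whose size concentrates on $\sstarK\b N$ and whose net effect is the jump map $\GK\ba\b$; and these pieces are stitched together inductively over the finitely many bursts in $[0,T]$, each step contributing an additive failure probability of order $Ne^{-cN^{\omega}}$. I would prove the three ingredients separately and then combine them by a union bound.

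First I would control the interburst flow while $\xi(t)\in\DLK\ba\b$. There the criticality parameter $\b\sum_m x_{K-1,m}$ is bounded strictly below $1$, so each cascade triggered by a single external promotion is a strictly subcritical branching process; an exponential tail bound on its total progeny, applied to the $O(N)$ promotions occurring before time $T$, shows that with probability $1-O(Ne^{-cN^{\omega}})$ every interburst cascade has size $O(\log N)$. This has two consequences: the choice $\g<b_{\min}(T)$ guarantees that the random times $T^{(N)}_j$ detect precisely the macroscopic bursts (each of size at least $b_{\min}(T)N>\g N$) and no spurious small cascade; and once small cascades are accounted for, $N^{-1}X_t$ is a density-dependent Markov jump process whose drift is exactly $\mu(x)\mcL x$, the right-hand side of~\eqref{eq:mfODE}, with $\mu(x)$ the mean cascade size. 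The exponential concentration for such chains~\cite{Kurtz.72, SW} then gives $\sup_t\av{N^{-1}X_t-e^{t\mcL}\xi}>\e$ with probability $O(Ne^{-cN^{\omega}})$ on the interval.

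The substantive work, and the step I expect to be the main obstacle, is the analysis of a single big burst. Modeling the cascade as an exploration process, a neuron receives a kick from each processed neuron independently with probability $p=\b/N+o(1/N)$, so after $sN$ neurons have been processed the number of kicks it has received converges to $Po(s\b)$; this is the Poisson heuristic that defines $\psimapK\b$, and it identifies the mean queue size as $\psimapK\b(\xi,s)+s$ and the burst size as its first zero $\sstarK\b$. The crux is a large-deviation statement: the random burst size concentrates on $\sstarK\b N$, and the post-burst empirical distribution agrees with $e^{\b s^*\mcM}\xi$, hence with $\GK\ba\b(\xi)$, up to $o(1)$. The difficulty is that the crossing happens near the threshold $\pDGK\ba\b$, where the exploration process is nearly critical and its total progeny has anomalously large fluctuations; taming these is what forces the error exponent down from $N$ to $N^{\omega(K,M)}$, with the loss compounding over the $K$ voltage levels and $M$ subpopulations through which mass must be tracked (hence the estimate $\omega\ge 1/(M(K+3))$). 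I would obtain the concentration by a Chernoff/martingale analysis of the exploration process, using the transversality inequality $1>\b\sum_m z_{K-1,m}$ established earlier to guarantee that $\GK\ba\b(\xi)\in\DLK\ba\b$, so that the trajectory re-enters the subcritical region cleanly and the next interburst interval is nondegenerate.

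Finally I would assemble the global estimate by induction on the big bursts. Assuming the state at $\t_n$ is controlled to within $\e'$, transversality of $e^{t\mcL}$ at the entry boundary $\pDGK\ba\b$ (Lipschitz dependence of the crossing time on the initial state) converts this into control of the next detection time $T^{(N)}_{n+1}$ to within $\e$, which is~\eqref{eq:thm1.bt}; the big-burst analysis then pins down $\xi(\t_{n+1})$, and the flow estimate carries the bound across the following interburst interval. Reindexing real time by $\varphi$ to realign the stochastic and deterministic burst locations yields the uniform bound~\eqref{eq:thm1.est} on the burst-free set $\mathcal{T}$. Since $m(T)<\infty$, each inductive step adds a failure probability of order $Ne^{-cN^{\omega}}$ and multiplies the error constant by a bounded factor, so a union bound over the finitely many bursts preserves the stated form, with $C_0(\e),C_1(\e)$ absorbing the dependence on $T$ and $m(T)$.
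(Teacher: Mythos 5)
Your outline matches the approach the paper relies on: the paper gives no proof of Theorem~\ref{thm:1} at all, deferring entirely to the $M=1$ argument of~\cite{MMNP-DPS} with the remark that it extends with minimal technical changes, and your decomposition---Kurtz/Shwartz--Weiss concentration for the subcritical interburst flow, exploration-process and large-deviation analysis of the near-critical macroscopic burst, and an inductive union bound over the finitely many bursts in $[0,T]$---is precisely the structure of that cited proof. So your proposal is correct in approach and coincides with the paper's (referenced) argument, including the source of the weak exponent $\omega(K,M)$ in the near-critical burst analysis.
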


In summary, the theorem has two main conclusions about what happens if
we consider a stochastic neuronal network with $N\gg1$ and if we
consider the corresponding deterministic mean-field system: first, the
times of the ``big bursts'' do in fact line up, and, second, as long
as we are willing to excise a small amount of time around these big
bursts, the convergence of the stochastic process to the deterministic
process is uniform up to a rescaling in time.  In short, it is
sufficient to consider the deterministic system when the network has a
sufficient number of neurons.

The guaranteed rate of convergence is subexponential due to the
presence of the $\omega(K,M)$ power in the exponent, but note that the
convergence is asymptotically faster than any polynomial.  Numerical
simulations done for the case of $M=1$ were reported in~\cite{BMB-DP}
show that $\omega(K,1)$ seemed to be close to 1, and in fact did not
seem to decay as $K$ was increased, suggesting that the lower bound is
pessimistic and that the convergence may in fact be exponential.
However, the lower bound given in the theorem above seems to be the
best that can be achieved by the authors' method of proof.  For the
details comprising a complete proof of Theorem~\ref{thm:1},
see~\cite{MMNP-DPS}.

\section{Properties of Mean Field Model for $K=2$}

A portion of the results of~\cite{MMNP-DPS} were an analysis of the
deterministic hybrid system $\hdsK\ba\br\b(t)$ when $M=1$.  It was shown
there that one could compute the solution of the system analytically
for $K=2, \b>0$, but an analytic solution seemed intractable for $K\ge
3$.  We want to extend the analysis done there for the $K=2$ case, but
for $M>1$.

One of the main results of the deterministic analysis was that the
hybrid system was multistable for $K$ sufficiently large: more
specifically, it was shown that for all $\b< K$, there was an
attracting fixed point in the model, and for all $\b > \theta(K)$,
there was an attracting periodic orbit, and finally that for some
$\gamma\in(0,1)$, $\theta(K)<\g K$ for $K$ sufficiently large.  Thus
there was a parameter regime with multiple stable solutions, which
meant that the stochastic system would switch between these attractors
on long timescales.  The analysis performed in~\cite{MMNP-DPS} was not
able to produce a closed-form solution but used asymptotic matching
techniques.

It should be noted that the analysis of this hybrid system is quite
difficult. As is well known, the analysis of hybrid systems can be
exceedingly complicated~\cite{Branicky.97,Liberzon.Morse.99};
questions just about the stability of fixed points is much more
complicated than in the non-hybrid (flow or map) case, and stability
of periodic orbits are more complicated still.  As we see below, the
state of the art technique for this kind of problem is very
problem-specific.

\subsection{Main result}

Note: here and in the following, we will only be considering the case
of $K=2$, so that we will drop the $K$ from the notation.

\begin{theorem}\label{thm:main}
  Choose $\ba,\br$ as in~\eqref{eq:defofba}, and recall the definition of
  the hybrid system $\hds\ba\br\b(t)$ given in
  Definition~\ref{def:mf}.  For $K=2$, and any $M\ge 1$, there
  exists $\b_M>0$ such that, for $\b>\b_M$, the hybrid system has a
  globally attracting limit cycle $\xiattr\ba\br\b$.  This orbit
  $\xiattr\ba\br\b$ undergoes infinitely many big bursts.  Moreover,
  $\limsup_{M\to\infty}\b_M/\log(\sqrt M)\le1$.
\end{theorem}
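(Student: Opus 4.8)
\emph{Reduction and the flow.}
The plan is to turn the $K=2$ hybrid system into a Poincar\'e return map on the firing threshold $\pDG\ba\b = \{\sum_m \xi_{1,m} = \b^{-1}\}$ and to control its contractivity for large $\b$. First I would use the constraint $\xi_{0,m}+\xi_{1,m}=\a_m$ to take $y_m:=\xi_{1,m}\in[0,\a_m]$ as the only state, with criticality parameter $Y:=\sum_m y_m$ and firing set $\{Y\ge\b^{-1}\}$. On $\DL\ba\b$, discarding the scalar time change $\mu$ (which changes neither the orbit nor the return map), the flow $\dot\xi=\mcL\xi$ reads $\dot y_m=\rho_m(\a_m-2y_m)$, with explicit solution $y_m(t)=\tfrac{\a_m}2+(y_m(0)-\tfrac{\a_m}2)e^{-2\rho_m t}$. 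Since $\sum_m \a_m/2 = 1/2 > \b^{-1}$ for $\b>2$, the flow drives $Y$ up across $\b^{-1}$, so every trajectory reaches $\pDG\ba\b$ and bursts; together with the fact that $G$ sends $\DG\ba\b$ into $\DL\ba\b$ (shown before the theorem), this already yields infinitely many big bursts.

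\emph{The burst map on the section.}
Next I would compute $\G\ba\b$ explicitly. Substituting $K=2$ collapses $\psimap\b$ to $\psimap\b(\xi,s)=1-s-e^{-\b s}(1+(1-Y)\b s)$, so $\sstar\b$ depends on $\xi$ only through $Y$. The crucial consequence is that on $\pDG\ba\b$, where $Y\equiv\b^{-1}$, the constant $c_0:=\b\sstar\b$ is the same at every point, so there $G$ is \emph{globally affine}: $\Gcoord\ba\b1m(\xi)=e^{-c_0}\big((1-c_0)y_m+c_0\a_m\big)$, with linear part the scalar multiple $\lambda_0 I$, $\lambda_0=e^{-c_0}(1-c_0)$. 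A short asymptotic analysis of $\psimap\b(\xi,s)=0$ at $Y=\b^{-1}$ gives $\sstar\b\to1$, hence $c_0=\b(1+o(1))$ and $|\lambda_0|=\b e^{-\b}(1+o(1))$ as $\b\to\infty$; one also checks $y_m\le\a_m$ keeps $\Gcoord\ba\b1m\ge0$, so the image stays admissible.

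\emph{Contraction and the threshold $\b_M$.}
Then I would study the return map $R=\G\ba\b\circ\Psi$, where $\Psi$ flows a post-burst state back to $\pDG\ba\b$; a fixed point of $R$ is the desired limit cycle $\xiattr\ba\br\b$. Differentiating, $DR=D\G\ba\b\cdot P_\Sigma\cdot D\phi^{t}$, where $D\phi^t=\mathrm{diag}(e^{-2\rho_m t})$ and $P_\Sigma=I-f\mathbf1^{\top}/(\mathbf1^{\top}f)$ is the oblique projection onto the tangent space of $\pDG\ba\b$ along the flow field $f$ with $f_m=\rho_m(\a_m-2y_m)$. The flow factor has $\|D\phi^t\|_2\le1$; the projection obeys $\|P_\Sigma\|_2=\sqrt M\,\|f\|_2/\|f\|_1\le\sqrt M$ (the $\sqrt M=\|\mathbf1\|_2$, worst case when $f$ concentrates on one coordinate); and $\|D\G\ba\b\|=|\lambda_0|$. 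Hence $\|DR\|_2\le|\lambda_0|\sqrt M$, which is $<1$ once $\b e^{-\b}\sqrt M<1$. Because $G$ squeezes all of $\pDG\ba\b$ into a small ball about $e^{-c_0}c_0\ba$, this bound holds uniformly on the compact return domain, so $R$ is a global contraction with a unique fixed point; combined with the fact that every trajectory enters the burst cycle, this makes the limit cycle globally attracting, with periodicity in physical time recovered on reinstating $\mu>0$. Solving $\b e^{-\b}\sqrt M=1$ gives $\b_M=\log\sqrt M+O(\log\log M)$, i.e. $\limsup_{M\to\infty}\b_M/\log\sqrt M\le1$.

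\emph{Main obstacle.}
The hard part will be upgrading these leading-order computations to uniform rigorous estimates. I expect three points to require care: (a) establishing transversality, that $\mathbf1^{\top}f$ stays bounded away from $0$ with every $f_m>0$ at the crossing (so $\pDG\ba\b$ is hit once and $\|P_\Sigma\|$ is controlled), which amounts to showing the crossing state has each $y_m<\a_m/2$; (b) bounding $\|DR\|_2$ uniformly over the entire return domain rather than only at the fixed point, exploiting that the affine $G$ contracts $\pDG\ba\b$ into a tiny neighborhood independent of the pre-burst shape; and (c) tracking the $o(1)$ corrections in $\sstar\b$ and $\lambda_0$ uniformly over the admissible $\ba,\br$ precisely enough to obtain the sharp constant $1$ in the $\log\sqrt M$ asymptotic, rather than merely a bound of order $\log M$.
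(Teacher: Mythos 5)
Your proposal is correct and takes essentially the same route as the paper: your factorization $DR=D\G\ba\b\cdot P_\Sigma\cdot D\phi^t$ is precisely the paper's stopped-flow Jacobian $\mM_M=(\mI-\vc\ve^\intercal)\mD_M$ (Theorem~\ref{thm:flow} and Theorem~\ref{thm:MatrixNormBound}), your scalar burst-map modulus $\lambda_0$ and its decay in $\b$ is Theorem~\ref{thm:map}, your infinitely-many-bursts step is Lemma~\ref{lem:ibb}, and the transversality condition $y_m<\a_m/2$ you flag in obstacle (a) is exactly the paper's absorbing set $\F\ba$ established in Lemma~\ref{lem:F}. The only noticeable difference is cosmetic: you bound the oblique projection norm by $\sqrt M$ where the paper's finer two-term splitting gives $1+\sqrt M/2$, and both constants yield the same asymptotic $\limsup_{M\to\infty}\b_M/\log\sqrt M\le 1$.
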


We delay the formal proof of the main theorem until after we have
stated and proved all of the auxiliary results below, but we give a
sketch here.

The main analytic technique we use is a contraction mapping theorem,
and we prove this in two parts.  We first show that for any two
initial conditions, the flow part of the system stretches the distance
between them by no more than $1+\sqrt{M}/2$
(Theorem~\ref{thm:flow}). We then show that the map $G_\b$ is a
contraction, and, moreover, its modulus of contraction can be made as
small as desired by choosing $\b$ large enough
(Theorem~\ref{thm:map}).  The stretching modulus of one ``flow, map''
step of the hybrid system is the product of these two numbers, and as
long as this is less than one we have a contraction.  Finally, we also
show that for $\b>2$, there exists an orbit with infinitely many big
bursts (Lemma~\ref{lem:ibb})---in fact, we show the stronger
result that all initial conditions give an orbit with infinitely many
big bursts.  All of this together, plus compactness of the phase
space, implies that this orbit is globally attracting.

We would like to point out that several of the steps mentioned above
seem straightforward at first glance, but are actually nontrivial for
a few reasons.  

First, consider the task of computing the growth rate for the flow
part of the hybrid system.  Clearly $e^{t\mcL}\cdot$ is a linear
contraction, since its eigenvalues are
\begin{equation*}
  \{0^M,-2\rho_1,-2\rho_2,\dots,-2\rho_M\},
\end{equation*}
and the point in the nullspace is unique once $\ba$ is chosen.
However, even though the linear flow $e^{t\mcL}$ is contracting, and
clearly $\av{e^{t\mcL} x - e^{t\mcL} x'} < \av{x-x'}$ for any fixed
$t>0$, the difficulty is that two different initial conditions can
flow for a different interval of time until the first big burst, and
clearly we cannot guarantee that $e^{t\mcL}x$ and $e^{t'\mcL}x'$ are
close at all.  For example, consider the extreme case where the flow
$e^{t\mcL}x$ hits the set $D(G_\b)$ at some finite time, and the flow
$e^{t\mcL}x'$ never does---then these trajectories can will end up
arbitrarily far apart, regardless of the spectrum of $\mcL$!  Because
of both these reasons, we cannot simply use the spectral analysis of
$\mcL$ for anything useful and have to work harder at establishing a
uniform contraction bound.

Moreover, we point out another subtlety of hybrid systems, which is
that the composition of two stable systems is not stable in general.
In fact, establishing stability properties for hybrid systems, even
when all components are stable and linear, is generally a very
nontrivial problem (see, for example~\cite{Lagarias.Wang.95}.  We get
around this by showing the subsystems are each contractions (i.e. we
show that $\norm{\cdot}_2$ is a strict Lyapunov function for the
system) but the fact that we have to control every potential direction
of stretching adds complexity to the analysis.

\subsection{Simplified description of the model when $K=2$}\label{sec:K2}

As mentioned above, we only consider the case $K=2$ in the sequel, but
we let $M$ be arbitrary.  This simplifies the description of the
hybrid system significantly, and we find it useful to explicitly
derive the formulas for $K=2$ now.

We first derive the form of~(\ref{eq:mfODE},~\ref{eq:mfmap}) for the
case $K=2$ but $M$ arbitrary (we will drop the $\xi$ notation and use
$x$ throughout this section).  We will also use the notation
\begin{equation*}
  y_k = \sum_{m=1}^M x_{k,m}
\end{equation*}
to represent all neurons at level $k$, regardless of subpopulation.
The flow becomes
\begin{equation}\label{eq:ODE2} 
  \frac{d}{dt}\left(\begin{array}{cc} x_{0,m} \\ x_{1,m} \\\end{array}\right) = \rho_m\left(\begin{array}{cc} -1&1 \\ 1&-1 \\\end{array}\right)\left(\begin{array}{cc} x_{0,m} \\ x_{1,m} \\\end{array}\right).
\end{equation}
It is not hard to see that the solution to this flow is 
\begin{equation*}
x_{1,m}(t) = \frac{x_{0,m}(0)+x_{1,m}(0)}2 + \frac{x_{1,m}(0)-x_{0,m}(0)}2 e^{-2\rho_m t}, x_{0,m}(t) = \alpha_m - x_{1,m}(t).
\end{equation*}
Using the fact that $x_{0,m}(0)+x_{1,m}(0) = \alpha_m$, this simplifies to
\begin{equation}\label{eq:soln1m}
  x_{1,m}(t) = \frac{\alpha_m}2 + \frac{x_{1,m}(0)-x_{0,m}(0)}2 e^{-2\rho_m t} = \frac{\alpha_m}2 - \l(\frac{\alpha_m}2 - x_{1,m}(0)\r)e^{-2\rho_m t}.
\end{equation}

Similarly, the function $\psimap\b$ can be simplified as
\begin{equation}\label{eq:psisimple}
\begin{split}
  \psimap\b(x,s)
  &= -s + \sum_{m=1}^M x_{1,m}\left(1-e^{-s\b}\right) +  \sum_{m=1}^M x_{0,m}\left(1-e^{-s\b}-s\b e^{-s\b}\right)\\
  &= -s + y_1 \left(1-e^{-s\b}\right) + y_0\left(1-e^{-s\b}-s\b e^{-s\b}\right).  
\end{split}
\end{equation}
and recall that 
  \begin{equation*}
    \sstar\b(x) = \inf_{s>0}\psimap\b(x,s).
  \end{equation*}

\begin{proposition}\label{prop:sstar}
  $\sstar\b(x)$ is constant on any $\pDG\ba\b$, and its value depends
  only on $\b$.  We write $\smin(\b)$ for its value on this set.
  $\smin(\b)$ is an increasing function of $\b$, and
  \begin{equation*}
    \lim_{\b\to\infty} \smin(\b) = 1.
  \end{equation*}
\end{proposition}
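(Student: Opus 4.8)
My plan is to collapse the whole statement onto a one-variable analysis of an explicit scalar function, and then read off all three assertions. First I would substitute the constraints defining $\pDG\ba\b$ into the simplified formula~\eqref{eq:psisimple} for $\psimap\b$. Every $x\in\DK\ba$ satisfies $y_0+y_1=\sum_m\alpha_m=1$, and on $\pDG\ba\b$ one has in addition $y_1=\b^{-1}$, hence $y_0=1-\b^{-1}$. Grouping the $y_0,y_1$ terms in~\eqref{eq:psisimple} as $(y_0+y_1)(1-e^{-s\b})-y_0 s\b e^{-s\b}$ and inserting these values, I expect to obtain
\[
  \psimap\b(x,s) \;=\; 1 - s - e^{-s\b}\bigl(1+(\b-1)s\bigr) \;=:\; \Phi(\b,s),
\]
a function of $\b$ and $s$ alone. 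Since $\sstar\b(x)$ is the first positive zero of $s\mapsto\psimap\b(x,s)$, this already proves that $\sstar\b$ is constant on $\pDG\ba\b$ with a value $\smin(\b)$ depending only on $\b$, and the two remaining claims become statements about the scalar equation $\Phi(\b,s)=0$.

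Next I would determine the qualitative shape of $\Phi(\b,\cdot)$ for $\b>2$ (the regime in which $\smin$ is defined). One checks directly that $\Phi(\b,0)=0$, $\partial_s\Phi(\b,0)=0$, and $\partial_s^2\Phi(\b,0)=\b(\b-2)>0$, so $\Phi$ is strictly positive just to the right of $0$, while $\Phi(\b,s)\to-\infty$ as $s\to\infty$. To locate the zero and count its multiplicity I would use $\partial_s\Phi(\b,s)=e^{-s\b}\bigl(1+\b(\b-1)s\bigr)-1$: the function $e^{-s\b}\bigl(1+\b(\b-1)s\bigr)$ equals $1$ at $s=0$, rises to a single interior maximum (for $\b>2$) and then decreases monotonically to $0$, so $\partial_s\Phi$ is positive on an initial interval $(0,s_1)$ and negative on $(s_1,\infty)$. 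Thus $\Phi(\b,\cdot)$ rises from $0$ to a unique interior maximum at $s_1$ and then strictly decreases to $-\infty$; it therefore has exactly one positive zero $\smin(\b)$, it is positive on $(0,\smin(\b))$ and negative on $(\smin(\b),\infty)$, and $\partial_s\Phi(\b,\smin(\b))<0$. Establishing this \emph{rise-then-fall} shape uniformly in $\b$ is, I expect, the main obstacle, since both remaining claims hinge on it.

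With the shape in hand, monotonicity is an application of the implicit function theorem. A direct computation gives $\partial_\b\Phi(\b,s)=(\b-1)s^2 e^{-s\b}$, which is strictly positive for $\b>1$, $s>0$. Because $\partial_s\Phi(\b,\smin(\b))<0$, the zero is simple and $\smin$ is $C^1$, with
\[
  \smin'(\b) \;=\; -\,\frac{\partial_\b\Phi}{\partial_s\Phi}\Big|_{s=\smin(\b)} \;>\;0,
\]
a positive number over a negative one; hence $\smin$ is strictly increasing.

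Finally, for the limit I would exploit the sign pattern just established: $\Phi(\b,\cdot)$ is positive on $(0,\smin(\b))$ and negative on $(\smin(\b),\infty)$, so it suffices to bracket $\smin(\b)$ by evaluating $\Phi$ at two fixed points straddling $1$. For any fixed $s>0$ we have $e^{-s\b}\bigl(1+(\b-1)s\bigr)\to 0$ as $\b\to\infty$, whence $\Phi(\b,s)\to 1-s$. Given $\e\in(0,1)$, for all sufficiently large $\b$ this yields $\Phi(\b,1-\e)>0$ and $\Phi(\b,1+\e)<0$, and by the sign pattern these force $1-\e<\smin(\b)<1+\e$. Therefore $\lim_{\b\to\infty}\smin(\b)=1$, completing the plan.
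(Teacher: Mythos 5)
Your proposal is correct and takes essentially the same route as the paper's proof: reduce $\psimap\b$ on $\pDG\ba\b$ to the scalar function $1-s-\bigl(1+(\b-1)s\bigr)e^{-s\b}$, establish its rise-then-fall shape for $\b>2$, obtain monotonicity of $\smin(\b)$ from the implicit function theorem, and deduce the limit from the pointwise convergence of this function to $1-s$. If anything, your write-up is more careful than the paper's: the paper's proof contains two mutually cancelling sign errors (it asserts $\partial\psimap\b/\partial s|_{s=\smin(\b)}>0$ and $\partial\psimap\b/\partial\b|_{s=\smin(\b)}<0$, both of which should be reversed), and your bracketing of $\smin(\b)$ between $1-\e$ and $1+\e$ makes rigorous the paper's informal step ``$\psimap\b(s)\approx 1-s$ for $\b$ large.''
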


\begin{proof}
  We see from~\eqref{eq:psisimple} that $\psimap\b(x,s)$, and thus
  $\sstar\b(x)$, depend on $x$ only through the sums $y_0$ and $y_1$.
  By definition $y_0$ and $y_1$ are constant on $\pDG\ba\b$, and
  therefore $\sstar\b(\cdot)$ is as well.  On $\pDG\ba\b$, $y_0 =
  (\b-1)/\b$ and $y_1 = 1/\b$, so on this set we can ignore $x$ and
  simplify $\psi$ to
  \begin{equation}\label{eq:psi2}
    \psimap\b(s) = 1 - s - e^{-s\b} - \frac{\b-1}{\b}s\b e^{-s\b} = 1-s -((\b-1)s+1) e^{-s\b}.
  \end{equation}
  It follows from this formula that
  \begin{equation*}
    \psimap\b(0) = 0,\quad \psimap\b(1) = -\b e^{-\b} < 0,\quad \frac{d\psimap\b}{ds}(0) = 0,\quad \frac{d^2\psimap\b}{ds^2}(0) = \b(\b-2).
  \end{equation*}
  If $\b<2$, then $\psimap\b(s)$ is negative for some interval of
  $s$ around zero, and thus $\smin(\b) = 0$.  If $\b>2$, then the
  graph $\psimap\b(s)$ is tangent to the $x$-axis at $(0,0)$ but is
  concave up, and thus positive for some interval of $s$ around zero,
  and therefore $\smin(\b) > 0$.  Since $\psimap\b(1) < 0$, it is
  clear that $\smin(\b)<1$.  Taking $\b$ large, we see that
  $\psimap\b(s) \approx 1-s$, so that $\smin(\b) \approx 1$ for $\b$
  large.

  Finally, thinking of $\psimap\b(s)$ as a function of both $s$ and
  $\b$, we have
  \begin{equation*}
    \pd{}s\psimap\b(s) =e^{-s\beta}\left(1-e^{s\beta}+\b(\beta-1) s \right),\quad
    \pd{}{\beta}\psimap\b(s) = e^{-\beta s}(\b-1)s^2.
  \end{equation*}

  Since the second derivative of $e^{s\b} \partial\psimap\b/\partial
  s$ is always negative, this means that $\partial\psimap\b/\partial
  s$ can have at most two roots, and one of them is at $s=0$.  From
  the fact that $\psimap\b(s)$ is concave up at zero, this means that
  the single positive root of $\partial\psimap\b/\partial s$ is
  strictly less than $\smin(\b)$.  From this it follows that
  $\partial\psimap\b/\partial s|_{s=\smin(\b)} > 0$.  It is clear from
  inspection that $\partial\psimap\b/\partial \b|_{s=\smin(\b)} <0$,
  and from this and the implicit function theorem, we have
  $\partial\smin/\partial \b >0$.
\end{proof}

By definition, a big burst occurs on the set $\DG\ba\b$, where $y_1
\ge \b^{-1}$. Since the flow has continuous trajectories, it must
enter $\DG\ba\b$ on the boundary $\pDG\ba\b$, and note that on this
set, formula~\eqref{eq:psi2} is valid.

In this case, we can simplify the formula for $\G\ba\b$ as follows:
\begin{equation}\label{eq:map2}
\begin{split}
  \Gcoord\ba\b0m(x) &= \alpha_m - e^{-\b\sstar\b(x)}(\b\sstar\b(x) x_{0,m} +  x_{1,m}),  \\
  \Gcoord\ba\b1m(x) &= e^{-\b\sstar\b(x)}(\b\sstar\b(x) x_{0,m} +  x_{1,m}).
\end{split}
\end{equation}
Note that  different subpopulations are coupled only
through $\sstar\b(x)$.


\subsection{Infinitely many big bursts}

In this section, we show that for $\b>2$, all orbits of
$\hds\ba\br\b(t)$ have infinitely many big bursts.

  Let us first recall that if $x\in \DG\ba\b$, then $\G\ba\b(x)
  \in\DL\ba\b$, as was shown in Section~\ref{sec:intuition}.  In
  words, every point in the big burst domain is mapped to outside of
  the big burst domain by $G_\b$.  In terms of the hybrid system, this
  means that we never apply the map twice in a row, but always have a
  nonzero interval of flow between two big bursts.

It is apparent that the flow~\eqref{eq:ODE2} has a family of
attracting fixed points given by $x_{0,m} = x_{1,m}$, and, moreover
that $x_{0,m} + x_{1,m}$ is a conserved quantity under this flow.
Therefore, if we assume that $x_{0,m}(t) + x_{1,m}(t) = \alpha_m$ for
some $t$, then this is true for all $t$.  Under this restriction,
there is a unique attracting fixed point $\xeq\ba\b$ given by 
\begin{equation*}
  \l(\xeq\ba\b\r)_{0,m} = \l(\xeq\ba\b\r)_{1,m} = \frac{\alpha_m}2.
\end{equation*}

\begin{lemma}\label{lem:ibb}
  If $\b>2$, then $\xeq\ba\b\in\DG\ba\b$ and every initial condition
  gives rise to a solution with infinitely many big bursts.
\end{lemma}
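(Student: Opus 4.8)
The claim has two parts, and I would tackle them in the natural order. First I would show that the equilibrium point $\xeq\ba\b$ lies in $\DG\ba\b$ when $\b>2$. By definition $\xeq\ba\b$ has $x_{1,m} = \alpha_m/2$ for every $m$, so the criticality coordinate is $y_1 = \sum_{m=1}^M x_{1,m} = \frac12\sum_{m=1}^M \alpha_m = \frac12$, using the normalization $\sum_m \alpha_m = 1$ from~\eqref{eq:defofba}. Since $\xeq\ba\b \in \DG\ba\b$ requires $y_1 \ge \b^{-1}$, this holds precisely when $\b^{-1} \le \frac12$, i.e. $\b \ge 2$. So for $\b>2$ the attracting equilibrium of the pure flow sits strictly inside the big-burst region. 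This is the easy half and is essentially a one-line computation.

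The second part is where the real content lies, and the plan is to argue by contradiction. Suppose some initial condition $x(0)$ gives rise to an orbit with only finitely many big bursts. Then after the last big burst time $\t_n$ (or from $t=0$ if there are none), the trajectory flows under $e^{t\mcL}$ for all remaining time without ever re-entering $\DG\ba\b$. But the flow~\eqref{eq:ODE2} decouples across subpopulations, and its solution~\eqref{eq:soln1m} shows that each $x_{1,m}(t) \to \alpha_m/2$ as $t\to\infty$; hence $y_1(t) = \sum_m x_{1,m}(t) \to \frac12$. Since $\b>2$ gives $\b^{-1} < \frac12$, for $t$ large enough we have $y_1(t) > \b^{-1}$, which forces the trajectory into $\DG\ba\b$ — contradicting the assumption that no further big burst occurs. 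So I would extract the convergence $y_1(t)\to\frac12$ directly from~\eqref{eq:soln1m} and pair it with the strict inequality $\b^{-1}<\frac12$.

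The one subtlety I would be careful about is the starting point of the final flow segment. After a big burst the map lands the trajectory in $\DL\ba\b$ (as recorded at the start of this section, $G_\b(\DG\ba\b)\subseteq\DL\ba\b$), so there is genuinely a nonzero flow interval available and the asymptotic analysis of the flow applies. The limit $y_1(t)\to\frac12$ is approached from below when the flow starts with $y_1 < \frac12$, so I must confirm that $y_1(t)$ actually crosses $\b^{-1}$ rather than merely approaching a limit above it: since the limit $\frac12$ strictly exceeds $\b^{-1}$, continuity of $y_1(t)$ guarantees a finite crossing time, and monotonicity is not even needed. I expect the main obstacle to be purely expository — making sure the contradiction is set up cleanly for both the ``finitely many but at least one'' and the ``zero big bursts'' cases simultaneously — rather than any analytic difficulty, since the decoupled flow solution hands us the limit explicitly.
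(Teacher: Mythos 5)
Your proposal is correct, and both halves track the paper's logic: the first part is the identical one-line computation $y_1 = \sum_m \alpha_m/2 = 1/2 > \b^{-1}$, and the second part rests on the same underlying fact, namely that the decoupled flow drives $y_1(t)$ toward the equilibrium value $1/2$, which strictly exceeds the threshold $\b^{-1}$ when $\b>2$. The difference is in how that fact is deployed. You argue by contradiction, per trajectory: if some flow segment never re-entered $\DG\ba\b$, the explicit solution~\eqref{eq:soln1m} would give $y_1(t)\to 1/2 > \b^{-1}$, so by continuity $y_1$ crosses $\b^{-1}$ in finite time. The paper instead proves something quantitatively stronger: a \emph{uniform} bound, independent of the initial condition, on how long any flow segment can last before hitting $\DG\ba\b$ --- it dominates $\sum_m x_{0,m}(t)$ by $e^{-\rmin t}$ with $\rmin = \min_m \rho_m$ and extracts an explicit hitting time of order $\rmin^{-1}\log(\b/(\b-1))$. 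Your soft argument fully suffices for the lemma as stated (and is arguably cleaner, since it needs no explicit constants and no domination estimate, only the limit read off from~\eqref{eq:soln1m}); what the paper's version buys is an initial-condition-independent upper bound on the interburst interval, which is extra quantitative information beyond the statement being proved. Your handling of the ``zero bursts versus finitely many bursts'' bookkeeping, and of the fact that $G_\b$ maps $\DG\ba\b$ into $\DL\ba\b$ so that a genuine flow segment follows every burst, matches the structural remarks the paper makes before and within its proof.
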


\begin{proof}
Notice that 
\begin{equation*}
  \sum_{m=1}^M \l(\xeq\ba\b\r)_{1,m} = \sum_{m=1}^M \frac{\alpha_m}2 = \frac12.
\end{equation*}
If $\b>2$, this is greater than $\b^{-1}$; every initial condition
will enter $\DG\ba\b$ under the flow.  We can actually show something
stronger: for any fixed $\b>2$, and any initial condition
$x\in\DL\ba\b$, there is a global upper bound on the amount of time
the system will flow until it hits $\DG\ba\b$.  Let $\rmin =
\min_{m=1}^{M} \rho_m$ and note that the initial condition $x_{0,m}(0)
\le \alpha_m$ for all $m$.  Then $x_{0,m}(t) = \alpha_m e^{-\rho_m
  t}$, and we have
\begin{equation*}
  \sum_{m=1}^M x_{0,m}(t) \le \sum_{m=1}^M \alpha_m e^{-\rho_m t} \le \sum_{m=1}^M \alpha_m e^{-\rmin t} = e^{-\rmin t},
\end{equation*}
so that at some time less than $t =\rmin^{-1} \log(\b/(\b-1)),$ we
have $y_0 = 1-\b^{-1}$ and thus $y_1=\b^{-1}$.  By
existence-uniqueness and using the fact that different $m$ modes are
decoupled in the flow, any other condition must reach this threshold
at least as quickly.

Since the only way for the hybrid system to have finitely many big
bursts is that it stay in the flow mode for an infinite time, we are
done.
\end{proof}

\subsection{Growth properties of stopped flow}

The main result of this subsection is Theorem~\ref{thm:flow}, which
gives an upper bound on how much the stopped flow can stretch vectors.
First we define a certain subset on which our estimates will be nice,
and which absorbs all trajectories of the flow.

\begin{definition}
\begin{equation*}
  \F\ba := \l\{x\in\D\ba\colon x_{1,m} < \frac{\alpha_m}2\mbox{ for all }m\r\}.
\end{equation*}
\end{definition}

\begin{lemma}\label{lem:F}
  For any $\b>2$, there exists $\nstar\b$ such that for any $\br>0$,
  and any solution of the hybrid system $\hds\ba\br\b(t)$ with
  initial condition $\hds\ba\br\b(0)\in\DL\ba\b$, we have
  $\hds\ba\br\b(t)\in\F\ba$ for all $t>\t_{\nstar\b}$.
\end{lemma}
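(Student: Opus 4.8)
The plan is to track, for each subpopulation $m$, the signed excess $\delta_m := x_{1,m} - \alpha_m/2$, and to note that $\F\ba$ is precisely the region $\{\delta_m < 0 \text{ for all } m\}$. The strategy is to show that (i) $\F\ba$ is forward-invariant under the flow, and (ii) each big burst pushes every $\delta_m$ strictly downward by a definite amount until it turns negative, the required number of bursts being bounded uniformly in $m$ and in $\br$.

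For the flow, the explicit solution \eqref{eq:soln1m} gives $\delta_m(t) = \delta_m(0)\,e^{-2\rho_m t}$, so the flow contracts $|\delta_m|$ and preserves its sign; in particular $\F\ba$ is invariant under the flow, and the factor $\lambda_m := e^{-2\rho_m\tau}$ accrued over a flow segment of length $\tau$ lies in $(0,1)$. For the burst, I use that the map is applied only on $\pDG\ba\b$, where by Proposition~\ref{prop:sstar} we have $\sstar\b = \smin(\b) =: s$, a constant depending only on $\b$. Substituting $x_{0,m} = \alpha_m - x_{1,m}$ and $x_{1,m} = \alpha_m/2 + \delta_m$ into \eqref{eq:map2} yields the affine update
\begin{equation*}
  \delta_m^{\mathrm{new}} = c\,\delta_m + \tfrac{\alpha_m}{2}A, \qquad c := e^{-\b s}(1-\b s), \quad A := e^{-\b s}(\b s + 1) - 1 .
\end{equation*}
The two elementary facts I will need are $A < 0$ for every $\b>2$ (since $u \mapsto e^{-u}(u+1)$ has derivative $-ue^{-u}<0$ and value $1$ at $u=0$, while $\b s>0$ here) and $|c| < 1$.

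Composing flow and burst gives the burst-to-burst update $\delta_m \mapsto c\lambda_m\delta_m + \frac{\alpha_m}{2}A$ with $\lambda_m\in(0,1)$. I then argue that so long as $\delta_m > 0$ each burst strictly decreases it: if $c \ge 0$ then $c\lambda_m \le 1$ forces $\delta_m^{\mathrm{new}} \le \delta_m - \frac{\alpha_m}{2}|A|$, whereas if $c < 0$ a positive $\delta_m$ is mapped below $\frac{\alpha_m}{2}A < 0$ in a single step. Since $\delta_m \le \alpha_m/2$ throughout (because $x_{1,m}\le\alpha_m$), the excess is driven to a nonpositive value within at most $\lceil 1/|A|\rceil$ bursts; crucially the factor $\alpha_m$ cancels, so this count is independent of $m$ and of $\br$. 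Once $\delta_m \le 0$, the flow keeps it $\le 0$ and the next burst returns $c\lambda_m\delta_m + \frac{\alpha_m}{2}A \le \frac{\alpha_m}{2}A < 0$, so the coordinate stays strictly inside $\F\ba$ forever. Taking $\nstar\b := \lceil 1/|A|\rceil + 1$ and invoking Lemma~\ref{lem:ibb} to guarantee that the bursts keep occurring, all coordinates lie in $\F\ba$ after the $\nstar\b$-th burst, and forward-invariance under both flow and subsequent bursts finishes the argument.

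The main obstacle is controlling the sign of $c$, equivalently whether $\b\smin(\b) \lessgtr 1$: the contraction-plus-drift bookkeeping giving a uniform, $\br$-independent burst count is transparent when $c \ge 0$ but must be replaced by the one-step argument when $c < 0$, and one must verify that the two regimes patch into a single $\nstar\b$ depending only on $\b$ (and diverging, as expected, as $\b \downarrow 2$, where $\smin(\b)\to 0$ and hence $|A|\to 0$).
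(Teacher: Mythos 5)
Your reduction of the burst-to-burst dynamics to the affine recursion $\delta_m \mapsto c\lambda_m\delta_m + \frac{\alpha_m}{2}A$, with $c = e^{-\b\smin(\b)}(1-\b\smin(\b))$ and $A = e^{-\b\smin(\b)}(\b\smin(\b)+1)-1$, is correct and is a genuinely different route from the paper's proof: the paper establishes absorption of $\F\ba$ by evaluating $\Gcoord\ba\b1m$ at the two extremes of the segment $\{x_{0,m}+x_{1,m}=\alpha_m,\ 0\le x_{1,m}\le \alpha_m/2\}$ (using $ze^{-z}\le 1/e$ and $(z+1)e^{-z}<1$), and establishes entry into $\F\ba$ via the multiplicative bound $x_{1,m}(\t_{n+1}) < h(\b)\,x_{1,m}(\t_n)$ with $h(\b) = e^{-\b\smin(\b)}(\b\smin(\b)+1) < 1$, taking $\nstar\b$ so that $h(\b)^{\nstar\b}<1/2$. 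Your additive-drift count $\lceil 1/|A|\rceil$ plays the role of their geometric count, and both are uniform in $m$ and $\br$, as required.

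However, one step fails as written: the forward-invariance of $\F\ba$ under bursts. You assert that once $\delta_m\le 0$, the next burst yields $c\lambda_m\delta_m + \frac{\alpha_m}{2}A \le \frac{\alpha_m}{2}A < 0$, but this inequality requires $c\lambda_m\delta_m \le 0$, which for $\delta_m\le 0$ forces $c\ge 0$. In the regime $c<0$, equivalently $\b\smin(\b)>1$ --- which genuinely occurs, since $\smin(\b)\to 1$ as $\b\to\infty$ by Proposition~\ref{prop:sstar} --- the term $c\lambda_m\delta_m$ is \emph{nonnegative} when $\delta_m\le 0$, and the inequality goes the wrong way. This is exactly the regime you flagged as delicate, but you only treated it in the entering phase (where $\delta_m>0$ makes the one-step argument work), not in the invariance phase. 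The gap is patchable: for $u := \b\smin(\b) > 1$ one has $|c| = e^{-u}(u-1)$ and $|A| = 1-e^{-u}(u+1)$, so $|c|<|A|$ is equivalent to $2ue^{-u}<1$, which holds because $\max_{u>0} 2ue^{-u} = 2/e < 1$; combining this with $|\delta_m|\le\alpha_m/2$ and $\lambda_m\le 1$ gives $c\lambda_m\delta_m + \frac{\alpha_m}{2}A \le \frac{\alpha_m}{2}\l(|c|-|A|\r) < 0$, restoring invariance. With that estimate inserted your argument is complete; note that the paper's endpoint evaluation of the map (which is affine in $x_{1,m}$ along the constraint segment) handles both signs of $c$ simultaneously, which is precisely what your invariance step is missing.
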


\begin{remark}
  In short, this lemma says that any initial condition will enter
  $\F\ba$ after a finite number of big bursts, and this number depends
  only on $\b$.
\end{remark}

\begin{proof}
  We will break this proof up into two steps: first, we will show that
  $\F\ba$ is absorbing; second, we will show that every initial
  condition will enter it after $\nstar\b$ big bursts.  Together,
  this will prove the lemma.

  First assume that $\hds\ba\br\b(t)\in\F\ba$, and let $\t_n$ be the
  time of the next big burst after $t$.  From~\eqref{eq:soln1m}, the
  $(1,m)$ coordinate cannot cross $\alpha_m/2$ under the flow, so
  $\hds\ba\br\b(\t_n-)\in\F\ba$.  Let us denote $x =
  \hds\ba\br\b(\t_n-)$, and, recalling~\eqref{eq:map2}, we have
  \begin{equation}\label{eq:G1m}
    \Gcoord\ba\b1m(x)= e^{-\b\sstar\b(x)}(\b\sstar\b(x) x_{0,m} +  x_{1,m}).
  \end{equation}
  This is a linear combination of $x_{0,m}\in[\alpha_m/2,\alpha_m]$
  and $x_{1,m}\in[0,\alpha_m/2]$, so we need only check the extremes.
  If we take $x_{0,m} = \alpha_m$ and $x_{1,m} = 0$, then we have
  $\Gcoord\ba\b1m(x) = ze^{-z}\alpha_m$ for some $z>0$, and
  $\sup_{z>0} ze^{-z} = 1/e$.  Considering the other extreme gives
  $\Gcoord\ba\b1m(x) = (z+1)e^{-z}\alpha_m/2$, and $\sup_{z>0}
  (z+1)e^{-z} = 1$.  In either case, we have
  $\Gcoord\ba\b1m(x)<\alpha_m/2$ and we see that $\F\ba$ is absorbing.

  Now assume that $\hds\ba\br\b(0)\not\in\F\ba$.  Since $\b>2$, it
  follows from Lemma~\ref{lem:ibb} that $\hds\ba\br\b(t)$ has
  infinitely many big bursts.  Let $x =\hds\ba\br\b(\t_1-)$, noting by
  definition that $x\in\pDG\ba\b$.  Using~\eqref{eq:G1m} and $x_{1,m}
  > \alpha_m/2$, $x_{0,m} < x_{1,m}$,
  \begin{equation*}
    \Gcoord\ba\b1m(x) < e^{-\b\sstar\b(x)}(\b\sstar\b(x)+1)x_{1,m}.
  \end{equation*}
  By Proposition~\ref{prop:sstar} and again recalling that
  $(z+1)e^{-z}<1$ for all $z>0$, this means that there is an
  $h(\b)\in(0,1)$ with
  \begin{equation*}
    \hdscoord\ba\br\b1m(\t_1) < h(\b)\cdot x_{1,m}.
  \end{equation*}

  If $h(\b)x_{1,m}<\alpha_m/2$, then we are done.  If not, notice that
  the flow generated by $\mcL$ will make the $(1,m)$ coordinate
  decrease, so it is clear that if
  $\hdscoord\ba\br\b1m(t)\not\in\F\ba$ for all $t\in[0,\t_n)$, then by
  induction $\hdscoord\ba\br\b1m(\t_n) < (h(\b))^n\alpha_m$.  Choose
  $\nstar\b$ so that $(h(\b))^{\nstar\b}<1/2$, and we have that
  $\hdscoord\ba\br\b1m(\t_{\nstar\b}) < \alpha_m/2$ and thus
  $\hds\ba\br\b(\t_{\nstar\b})\in\F\ba$.
\end{proof}

\begin{theorem}\label{thm:flow}
  Choose any two initial conditions $x(0), \widetilde{x}(0)\in\F\ba
  \cap \DL\ba\b$, and define $\t,\widetilde{\t}$ as
  in~\eqref{eq:defoftau1}.  Then
  \begin{equation*}
    \norm{e^{\t\mcL}x(0) - e^{\widetilde{\t}\mcL}\widetilde{x}(0)} \le \left(1+\frac{\sqrt{M}}2\right)\norm{x(0) -\widetilde{x}(0)},
  \end{equation*}
  i.e. for any two initial conditions, the distance at the time of the
  first big burst has grown by no more than a factor of
  $1+\sqrt{M}/2$.
\end{theorem}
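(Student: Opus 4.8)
The plan is to pass to reduced coordinates and then split the estimate into a genuine contraction plus a \emph{time-mismatch} term. Since $K=2$, the constraint $x_{0,m}+x_{1,m}=\alpha_m$ lets me record a state by its level-one entries alone, and for two states $\norm{x-\widetilde x}^2 = 2\sum_m (x_{1,m}-\widetilde x_{1,m})^2$; so the two factors of $\sqrt2$ will cancel and it suffices to control the map $\Phi$ sending $x(0)$ to $e^{\t\mcL}x(0)$ in the $M$ variables $x_{1,m}$. Writing $p_m=\alpha_m/2-x_{1,m}(0)$, formula~\eqref{eq:soln1m} gives $x_{1,m}(t)=\alpha_m/2-p_m e^{-2\rho_m t}$, so each level-one coordinate increases monotonically toward $\alpha_m/2$, and $\t$ (defined in~\eqref{eq:defoftau1}) is the unique time at which $\sum_m x_{1,m}=\beta^{-1}$, equivalently $\sum_m p_m e^{-2\rho_m\t}=\tfrac12-\beta^{-1}$. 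Membership in $\F\ba$ guarantees $p_m>0$ for every $m$, which is exactly what makes this crossing time well defined and the crossing monotone; I will use this repeatedly.

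Next I would set up the decomposition. Assuming without loss of generality that $\t\le\widetilde\t$, I insert the intermediate point $e^{\t\mcL}\widetilde x(0)$:
\[
  \norm{e^{\t\mcL}x(0)-e^{\widetilde\t\mcL}\widetilde x(0)} \le \norm{e^{\t\mcL}\l(x(0)-\widetilde x(0)\r)} + \norm{e^{\t\mcL}\widetilde x(0)-e^{\widetilde\t\mcL}\widetilde x(0)}.
\]
The first term is handled by the spectral structure of $\mcL$: its eigenvalues are $\{0^M,-2\rho_1,\dots,-2\rho_M\}$, so $e^{\t\mcL}$ is a (non-strict) $\norm{\cdot}_2$-contraction for every fixed $\t\ge0$; this is the source of the summand $1$ in the stated bound. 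The value of the insertion is that both states are now flowed for the \emph{same} duration $\t$, so this step is entirely blind to the difference in stopping times.

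The whole difficulty is the second, time-mismatch, term, and this is where I expect the real work. Naively the two flows run for different durations and could drift apart — precisely the obstruction flagged in the sketch — so the spectral contraction is useless here. The structural fact I would exploit is that both $e^{\t\mcL}x(0)$ and $e^{\widetilde\t\mcL}\widetilde x(0)$ sit on the boundary $\pDG\ba\b$, i.e. both satisfy $\sum_m x_{1,m}=\beta^{-1}$. Writing the componentwise mismatch as $\delta_m=\widetilde p_m e^{-2\rho_m\t}\bigl(1-e^{-2\rho_m(\widetilde\t-\t)}\bigr)$, monotonicity of the flow together with $\widetilde x(0)\in\F\ba$ forces every $\delta_m\ge0$, while the two boundary conditions give the scalar identity $\sum_m\delta_m=\sum_m(\widetilde p_m-p_m)e^{-2\rho_m\t}$. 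Because the $\delta_m$ all share a sign I may pass to the $\ell^1$ norm, $\norm{\delta}_2\le\sum_m\delta_m$, and then apply Cauchy--Schwarz against $(e^{-2\rho_m\t})_m$, whose Euclidean norm is at most $\sqrt M$; this bounds the mismatch by $\sqrt M\,\bigl(\sum_m(p_m-\widetilde p_m)^2\bigr)^{1/2}$, i.e. by $\sqrt M$ times the reduced input distance.

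The main obstacle is therefore not the overall shape of the estimate but the \emph{constant}. The plain sign-plus-Cauchy--Schwarz step above only yields a factor $1+\sqrt M$; extracting the sharper $1+\sqrt M/2$ requires feeding the $\F\ba$ bound $x_{1,m}<\alpha_m/2$ (equivalently $0<p_m\le\alpha_m/2$) and the normalization $\sum_m\alpha_m=1$ back into the mismatch estimate rather than using them only at the end. Concretely I would bound $\delta_m\le\tfrac{\alpha_m}{2}\bigl(1-e^{-2\rho_m(\widetilde\t-\t)}\bigr)$ and play this off against the scalar identity for $\sum_m\delta_m$, so that the box constraint on the $\delta_m$ and the hyperplane constraint from $\pDG\ba\b$ are used jointly. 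Carefully tracking this coefficient through the sign/Cauchy--Schwarz argument — and, if necessary, recasting the two-point estimate as a uniform bound on the operator norm of $D\Phi$ over the absorbing region $\F\ba$ supplied by Lemma~\ref{lem:F} — is the delicate accounting I expect to occupy most of the proof; once the stretch factor is in hand, it is multiplied by the contraction modulus of $\G\ba\b$ (Proposition~\ref{prop:sstar} and the map~\eqref{eq:map2}) to close the contraction-mapping argument for $\b$ large.
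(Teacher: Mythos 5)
Your decomposition is sound as far as it goes, but what it actually proves is the bound with constant $1+\sqrt M$, and the constant is the whole content of Theorem~\ref{thm:flow}: it must beat the reciprocal of the contraction modulus of $\G\ba\b$ in the proof of Theorem~\ref{thm:main}, so ``$1+\sqrt M$ with ideas for improvement'' is a genuine gap, not a cosmetic one. Moreover, the repair you sketch (feeding the box constraint $\delta_m\le\tfrac{\alpha_m}{2}\bigl(1-e^{-2\rho_m(\widetilde\t-\t)}\bigr)$ back into the sign/Cauchy--Schwarz step) is not what closes it. The mechanism that actually produces the $\sqrt M/2$ in the paper is a \emph{centering} trick that is only available for perturbations that are zero-sum across subpopulations. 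In Jacobian form: the derivative of the stopped flow in the reduced level-one coordinates is $\mM_M=\mD_M-\vc\,\vd^{\intercal}$, with $d_m=e^{-2\rho_m\t}\in(0,1)$ and $\vc$ the probability vector of~\eqref{eqn:defineC} (positivity of $\vc$ is where $\F\ba$ enters); your two terms are exactly $\mD_M\beps$ and $-\vc\,(\vd^{\intercal}\beps)$. If $\beps\in\1_M^\perp$, then $\vd^{\intercal}\beps=(\vd-\bar d\,\1)^{\intercal}\beps$ with $\bar d=M^{-1}\sum_m d_m$, and since each $d_m\in(0,1)$,
\begin{equation*}
  \norm{\vd-\bar d\,\1}_2^2=\sum_m d_m^2-M\bar d^2\le M\bigl(\bar d-\bar d^2\bigr)\le \frac M4,
\end{equation*}
so $\norm{\vc\,(\vd^{\intercal}\beps)}_2\le\norm{\vc}_2\norm{\vd-\bar d\,\1}_2\norm{\beps}_2\le\tfrac{\sqrt M}{2}\norm{\beps}_2$. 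In your two-point language this is a one-line fix: when $\sum_m v_m=0$, your identity $\sum_m\delta_m=\sum_m v_m e^{-2\rho_m\t}$ becomes $\sum_m v_m\bigl(e^{-2\rho_m\t}-\bar d\bigr)$, and your own chain $\norm{\delta}_2\le\sum_m\delta_m$ plus Cauchy--Schwarz then gives $\tfrac{\sqrt M}{2}\norm{v}_2$.

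The deeper problem is that the zero-sum hypothesis cannot be dispensed with, so no amount of accounting within your setup (which allows arbitrary pairs in $\F\ba\cap\DL\ba\b$) will reach $1+\sqrt M/2$. Take all $\rho_m$ equal, $\alpha_1$ close to $1$, $x_{1,m}(0)$ just below $\alpha_m/2$ for $m\ne1$ (so $\vc\approx e_1$), and the level-one sum just below $\b^{-1}$ (so $\t\approx0$ and $\vd\approx\1$); then perturb in the all-ones direction $v=\eta\1$, which keeps both points in $\F\ba\cap\DL\ba\b$ but is not zero-sum. The Jacobian sends $\1$ to $\vd-\bigl(\sum_m d_m\bigr)\vc\approx\1-M e_1$, whose norm is $\sqrt{M(M-1)}$, giving a stretch factor $\approx\sqrt{M-1}$, which exceeds $1+\sqrt M/2$ once $M\ge7$. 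So the estimate for arbitrary pairs is genuinely out of reach; the paper's proof silently imposes $\sum_m\e_m=0$ through its perturbation ansatz, i.e.\ it proves the bound on each affine slice $\bigl\{\sum_m x_{1,m}=\mathrm{const}\bigr\}$, and this is exactly what the application needs, because after one big burst all iterates of $\HH\ba\br\b$ lie on a single such slice (the image of $\G\ba\b\circ e^{\t\mcL}$ has $\sum_m x_{1,m}$ equal to a constant depending only on $\b$, by Proposition~\ref{prop:sstar}). Your proposal is missing precisely this restriction and the centering step it enables.
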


\begin{proof}
  Before we start, recall that the map $\e^{\t\mcL}x$ is nonlinear in
  $x$, because $\t$ itself depends nonlinearly on $x$.
  Let $\1_M$ be the all-ones column vector in $\R^M$.  Let
  $x(0)\in\DL\ba\b$ and consider a perturbation $\boldsymbol\e =
  \{\e_m\}$ with $\sum_m \e_m = 0$, i.e. $\boldsymbol\e \in
  \1_M^\perp$, and define $\widetilde{x}(0)$ by
\begin{equation*}
  \widetilde{x}_{m,1}(0) = x_{m,1}(0) + \e_m,\quad   \widetilde{x}_{m,0}(0) = x_{m,0}(0) - \e_m.
\end{equation*}
Define $\t, \widetilde{\t}$ as the burst times associated to these
initial conditions as in~\eqref{eq:defoftau1}, and by definition, we
have
\begin{equation*}
  \sum_{m=1}^M x(\t-)_{1,m} = \sum_{m=1}^M \widetilde{x}(\widetilde{\t}-)_{1,m} = \frac1\b.
\end{equation*}
Writing $\widetilde\t = \t + \d$ and using~\eqref{eq:soln1m}, we have
\begin{equation*}
\sum_{m=1}^M \l(\frac{\alpha_m}2 - \l(\frac{\alpha_m}{2} - x_{1,m}(0)\r) e^{-2\rho_m \t}\r) = \sum_{m=1}^M \l(\frac{\alpha_m}2 - \l(\frac{\alpha_m}{2} - \widetilde{x}_{1,m}(0)\r) e^{-2\rho_m (\t+\d)}\r)
\end{equation*}
Since $\widetilde x -x = O(\e)$ and $e^{-2\rho_m\delta} = (1+O(\d))$,
we can see from this expression that the leading order terms in both
$\e$ and $\delta$ are of the same order.  Thus, Taylor expanding to
first order in $\e$ and $\delta$ and canceling gives a solution for
$\delta$:
\begin{equation}
\delta = -\frac{\sum_\ell \epsilon_\ell e^{-2\rho_\ell \t}}{2 \sum_\ell \rho_\ell\l(\dfrac{\alpha_\ell}{2} - x_{1,\ell}(0)\r) e^{-2\rho_\ell \t}}+O(\e^2).
\end{equation}

We then have
\begin{align*}
\widetilde{x}_{1,m}(\t+\d) - x_{1,m}(\t) &= \e_me^{-2\rho_m\t} -2\rho_m \l(\frac{\a_m}{2}-x_{1,m}(0)\r) \d e^{-2\rho_m\t}\\
&=\e_m e^{-2\rho_m \t} - c_m\sum_{\ell} \e_{\ell} e^{-2\rho_{\ell}\t} + O(\e^2),
\end{align*}
where
\begin{equation}
\label{eqn:defineC}
c_m = \frac{\rho_m \l(\dfrac{\alpha_m}{2}  - x_{1,m}(0)\r) e^{-2\rho_m \t} }{\sum_{\ell} \rho_{\ell} \l(\dfrac{\alpha_{\ell}}{2}  - x_{1,\ell}(0)\r) e^{-2\rho_{\ell}  \t} }.  
\end{equation}
Since $x(0) \in \F\ba$, $c_m>0$. It is then clear from the
definition that $c_m<1$. 
Writing this in
matrix form in terms of $\e$ gives
\begin{equation}
\left(
\begin{array}{c}
\widetilde x_{1,m}(\t+\delta) - x_{1,m}(\t) \\
\widetilde x_{2,m}(\t+\delta) - x_{2,m}(\t) \\
\vdots \\
\widetilde x_{2,M}(\t+\delta) - x_{2,M}(\t) \\
\end{array}
\right)  = \textbf{M}_M
\left(
\begin{array}{c}
\epsilon_1 \\
\epsilon_2\\
\vdots \\
\epsilon_N\\
\end{array}
\right) + O(\e^2),
\end{equation}

where the matrix $\mM_M$ is defined as,
\begin{equation}
\label{eq:defofM}
\mM_M = \
\left(
\begin{array}{c c c c}
e^{-2\rho_1 \t} - c_1 e^{-2\rho_1 \t} & - c_1 e^{-2\rho_2 \t}   & \cdots & - c_1 e^{-2\rho_M \t} \\
- c_2 e^{-2\rho_1 \t}  & e^{-2\rho_2 \t} - c_2 e^{-2\rho_2 \t} & \cdots & - c_2 e^{-2\rho_M \t}  \\
\vdots & \vdots & \ddots & \vdots\\
-c_Me^{-2\rho_1 \t} & -c_Ne^{-2\rho_2 \t} & \cdots & e^{-2\rho_M \t}  -c_Ne^{-2\rho_M \t}\\
\end{array}
\right),
\end{equation}
or, more compactly,
\begin{equation*}
  (\mM_M)_{ij} = -c_i e^{-2\rho_j \t} + \delta_{ij}e^{-2\rho_i \t}.
\end{equation*}
Thus, the map $e^{\t \mcL}x$ has Jacobian $\mM_M$.  Since $\mM_M$ has
zero column sums, it is apparent that $\mathbf{1}^\intercal \mM_M =
\mathbf{0}$ and thus $0\in\Spec(\mM_M)$.  Since all of the nondiagonal
entries of $\mM_M$ are bounded above by one, the standard Gershgorin
estimate implies that all of the eigenvalues of
$\sqrt{\mM_M^\intercal\mM_M}$ lie in a disk of radius $O(M)$ around
the origin, but this is not good enough to establish our result.

We can work out a more delicate bound: by the definition of $\D\ba$,
we need only consider zero sum perturbations, and so in fact we are
concerned with $\mM_M$ restricted to $\1^\perp_M$.  From this and the
fundamental theorem of calculus, it follows that
\begin{equation*}
  \norm{e^{\t\mcL}x(0) - e^{\widetilde{\t}\mcL}\widetilde{x}(0)} \le \norm{\mM_M|_{\1_m^\perp}}_2\norm{x(0) - \widetilde{x}(0)},
\end{equation*}
where $\norm\cdot_2$ is the spectral norm of a matrix
(q.v.~Definition~\ref{def:specnorm} below).  Using the bound in
Lemma~\ref{thm:MatrixNormBound} proves the theorem.
\end{proof}

\begin{definition}\label{def:specnorm}
  We define the {\bf spectral norm} of a square matrix $A$ by
\begin{equation*}
  \norm{A}_2 = \sup_{x\neq \mathbf{0}} \frac{\norm{Ax}_2}{\norm{x}_2},
\end{equation*}
where $\norm\cdot_2$ is the Euclidean ($L^2$) norm of a vector.
\end{definition}

The spectral norm of a matrix is equal to its largest singular value,
and if the matrix is symmetric, this is the same as the largest
eigenvalue.  In particular, it follows from the definition that
\begin{equation*}
  \norm{Ax}_2 \le \norm{A}_2\norm{x}_2.
\end{equation*}
%


\begin{theorem}
\label{thm:MatrixNormBound}
Let $\1_M^\perp\subseteq \R^M$ denote the subspace of zero-sum
vectors.  $\mM_M\colon \1^\perp_M\to\1^\perp_M$ since it is a zero
column sum matrix, and thus the restriction is well-defined.  Then
\begin{equation}
\norm{\mM_M|_{\1^\perp_M}}_2 < 1 + \frac{\sqrt{M}}{2}.
\end{equation}
\end{theorem}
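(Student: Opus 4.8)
The plan is to exploit the explicit diagonal-plus-rank-one structure of $\mM_M$. Writing $a_j = e^{-2\rho_j\t}$, $D = \operatorname{diag}(a_1,\dots,a_M)$, $a = (a_1,\dots,a_M)^\intercal$ and $c = (c_1,\dots,c_M)^\intercal$, the compact formula $(\mM_M)_{ij} = -c_i e^{-2\rho_j\t} + \delta_{ij}e^{-2\rho_i\t}$ reads $\mM_M = D - c\,a^\intercal$. First I would record the three facts I need: each $a_j\in(0,1)$ (since $\t>0$ and $\rho_j>0$), each $c_j>0$, and $\sum_j c_j = 1$ — the last being immediate from~\eqref{eqn:defineC}, and in fact exactly the identity forcing the zero column sums noted above. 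These give $\norm{D}_2 = \max_j a_j < 1$ and $\norm{c}_2 \le \norm{c}_1 = 1$.

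Next, for an arbitrary zero-sum vector $v\in\1_M^\perp$ I would split $\mM_M v = Dv - c\,(a^\intercal v)$ and apply the triangle inequality,
\[
  \norm{\mM_M v}_2 \le \norm{Dv}_2 + \av{a^\intercal v}\,\norm{c}_2 \le \norm{D}_2\,\norm{v}_2 + \av{a^\intercal v},
\]
using $\norm{c}_2\le 1$. The remaining task is to control $\av{a^\intercal v}$, and this is where the restriction to $\1_M^\perp$ must enter: because $\1^\intercal v = 0$, I can recenter and write $a^\intercal v = (a - \bar a\,\1)^\intercal v$ with $\bar a = M^{-1}\sum_j a_j$, so Cauchy--Schwarz gives $\av{a^\intercal v} \le \norm{a - \bar a\,\1}_2\,\norm{v}_2$.

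It then remains to show $\norm{a - \bar a\,\1}_2 \le \sqrt M/2$, which I would obtain by a short extremal estimate: with $s = \sum_j a_j$ one has $\norm{a-\bar a\,\1}_2^2 = \sum_j a_j^2 - s^2/M$, and since $a_j\in[0,1]$ forces $a_j^2\le a_j$, this is at most $s - s^2/M = s(1 - s/M) \le M/4$, the last maximized at $s = M/2$. Feeding this back gives the uniform bound $\norm{\mM_M v}_2 \le \bigl(\max_j a_j + \tfrac{\sqrt M}{2}\bigr)\norm{v}_2$ on $\1_M^\perp$, and since $\max_j a_j < 1$ strictly, the operator norm is at most $\max_j a_j + \tfrac{\sqrt M}{2} < 1 + \tfrac{\sqrt M}{2}$, which is the claim.

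The only genuinely delicate point — and the reason the naive Gershgorin/triangle estimate gave only $1+\sqrt M$ — is this centering step: bounding $\av{a^\intercal v}$ over all of $\R^M$ costs $\norm{a}_2\le\sqrt M$, whereas on $\1_M^\perp$ one pays only the ``standard deviation'' $\norm{a-\bar a\,\1}_2$, which the worst configuration (half the $a_j$ near $0$, half near $1$) caps at $\sqrt M/2$. I expect this halving, together with keeping the strictness through $\norm{D}_2<1$, to carry all the content of the theorem; everything else is bookkeeping on the factored form $\mM_M = D - c\,a^\intercal$.
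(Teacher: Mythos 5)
Your proof is correct and takes essentially the same route as the paper: both exploit the factorization $\mM_M = \mD_M - \vc\,\vd_M^\intercal$, use the zero-sum hypothesis to recenter $\vd_M$ by its mean, and then combine $\norm{\mD_M}_2 < 1$ with the rank-one bound $\norm{\vc}_2\,\norm{\vd_M - \bar d\,\ve}_2 \le \sqrt{M}/2$ via the triangle inequality. The only differences are cosmetic and in your favor: you argue vector-by-vector on $\1_M^\perp$ where the paper inserts the projection $\mI - M^{-1}\ve\ve^\intercal$ at the matrix level, and your variance estimate ($a_j^2 \le a_j$ for $a_j \in [0,1]$, giving $s - s^2/M \le M/4$) actually proves the bound $\norm{\vd_M - \bar d\,\ve}_2 \le \sqrt{M}/2$ that the paper asserts without justification.
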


\begin{proof}
  Let us denote $\mI_M$ to be the $M$-by-$M$ identity matrix and
  $\ve_M$ the all-one column vector in $\mathbb{R}^M$.  We will also
  define the matrix $\mD_M$ and vector $\vd_M$ by
\begin{equation*}
  \vd_M = [e^{-2\rho_1 s} ,  e^{-2\rho_2 s} , \cdots, e^{-2\rho_N s} ]^\intercal,
\end{equation*}
and $\mD_M$ is the matrix with $\vd_{M}$ on the diagonal,
i.e. $(\mD_M)_{ij} = \delta_{ij} e^{-2\rho_i \t}$.

Any vector $\mathbf{v}\in\1_M^\perp$ is in the null space of the
matrix $\ve \ve^\intercal$, and thus $(\mI_M - {M}^{-1} \ve
\ve^\intercal)\mathbf{v} = \mathbf{v}$, and $\mM_M = \mM_M(\mI_M -
{M}^{-1} \ve \ve^\intercal)$ on $\1^\perp$, so it suffices for our
result to bound the norm of $\mM_M(\mI_M - {M}^{-1} \ve
\ve^\intercal)$.

We can factorize
\begin{equation}
\mM_M  = (\mI - \vc \ve ^\intercal ) \mD_M,
\end{equation}
where the components of $\vc$ are given in~\ref{eqn:defineC}.  To see
this, we compute
\begin{align*}
  ((\mI - \mathbf{c} \ve ^\intercal ) \mD_M)_{ij} &= (\mathbf{D}_M)_{ij} - (\mathbf{c} \ve ^\intercal\mathbf{D}_M)_{ij} = (\mathbf{D}_M)_{ij} -\sum_{k} c_i \cdot 1\cdot \delta_{k,j}e^{-2\rho_j\t}\\
  &= \delta_{ij}e^{-2\rho_i \t} - c_ie^{-2\rho_j\t}.
\end{align*}

Let us first write
\begin{align*}
\mM_M
&= (\mI - \mathbf{c} \ve ^\intercal ) \mD_M  = (\mD_M - \mD_M \mathbf{c} \ve ^\intercal  + \mD_M \mathbf{c} \ve ^\intercal - \mathbf{c} \ve ^\intercal \mD_M ) \\
&= \mD_M (\mI - \mathbf{c} \ve ^\intercal ) + (\mD_M \mathbf{c} \ve ^\intercal - \mathbf{c} \mathbf{d}_M ^\intercal ),
\end{align*}
where we use the relation $\ve ^\intercal \mD_M = \mathbf{d}_M^\intercal$, and then
\begin{equation}
\label{eqn:MatrixSeparation}
\mM_M ( \mI - M^{-1} \ve \ve^\intercal) = \mD_M (\mI - \mathbf{c} \ve ^\intercal )  ( \mI - M^{-1} \ve  \ve^\intercal) +  (\mD_M \mathbf{c} \ve ^\intercal - \mathbf{c} \mathbf{d}_M ^\intercal ) ( \mI - M^{-1} \ve  \ve^\intercal) .
\end{equation}
We break this into two parts.  Using the fact that $\ve^\intercal\ve = M$, we have
\begin{equation*}
  \vc \ve^\intercal(\mI_M-M^{-1}\ve \ve^\intercal) = \mI_M \vc \ve^\intercal - M^{-1}\vc \ve^\intercal\ve \ve^\intercal = \vc \ve^\intercal -\vc \ve^\intercal =0,
\end{equation*}
and thus the first term can be simplified to
\begin{equation}\label{eq:term1}
\begin{split}
  &\mD_M (\mI - \mathbf{c} \ve ^\intercal )  ( \mI - M^{-1} \ve \ve^\intercal) \\
  &=  \mD_M (\mI - M^{-1}\ve \ve^\intercal) - \mD_M(\vc \ve^\intercal)(\mI_M-M^{-1}\ve \ve^\intercal)
  =  \mD_M (\mI - M^{-1}\ve \ve^\intercal).
\end{split}
\end{equation}

Since the matrix $M^{-1}\ve \ve^\intercal$ is an orthogonal projection
matrix with norm $1$ and rank $1$, it follows that $\mI_M - M^{-1}\ve
\ve^\intercal$ is also a projection matrix with norm $1$ and rank
$M-1$. By Cauchy-Schwarz, the norm can be bounded by
\begin{equation}
\norm { \mD_M (\mI - M^{-1}\ve \ve^\intercal)}_2 \leq \norm{\mD_M}_2 \norm{\mI_M - M^{-1}\ve \ve^\intercal}_2 = \norm{\mD_M}_2  <1.  
\end{equation}
(The last inequality follows from the fact that $\mD_M$ is diagonal
and all entries are less than one in magnitude.)

For the second term in Equation~\eqref{eqn:MatrixSeparation} and
noting that $\vd^\intercal \ve = \sum_m d_m$, we obtain
\begin{equation}
(\mD_M \vc \ve ^\intercal - \vc \vd ^\intercal ) ( \mI - M^{-1} \ve  \ve^\intercal)  = \mD_M \vc \ve ^\intercal - \vc \vd ^\intercal  - \mD_M \vc \ve ^\intercal + \frac{\vd^\intercal \ve}{M} \vc \ve ^\intercal = \vc \l(\frac{\sum_m d_m}{M} \ve - \vd\r)^\intercal.
\end{equation}
This outer product is of rank $1$, and thus it has exactly one
non-zero singular value; this singular value is the product of the
$L^2$ norms of the two vectors, and therefore

\begin{equation*}
  \norm{(\mD_M \vc \ve ^\intercal - \vc \vd ^\intercal ) ( \mI_M - M^{-1} \ve  \ve^\intercal) }_2 = \norm{ \vc }_2 \norm{ \vd - \frac{\sum_m d_m}{M} \ve }_2< 1\cdot \frac{\sqrt{M}}2.
\end{equation*}
Using Equation~\eqref{eqn:MatrixSeparation}, and the triangle
inequality gives the result.
\end{proof}

\subsection{Contraction of the big burst map}

In this section, we demonstrate that $\G\ba\b$ is a contraction for
$\b$ large enough, and, moreover, that one can make the contraction
modulus as small as desired by choosing $\b$ sufficiently large.

\begin{theorem}\label{thm:map}
  For any $M\ge 1$ and $\delta>0$, there is a $\b_1(M,\d)$ such that
  for all $\b>\b_1(M,\d)$ and $x,\widetilde{x}\in\pDG\ba\b$,
  \begin{equation*}
    \norm{\G\ba\b(x) - \G\ba\b(\widetilde{x})} \le \delta\norm{x-\widetilde{x}}.
  \end{equation*}
  In particular, by choosing $\b$ sufficiently large, we can make this
  map have as small a modulus of contraction as required.
\end{theorem}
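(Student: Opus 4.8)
The plan is to exploit Proposition~\ref{prop:sstar}, which tells us that $\sstar\b(x)$ is constant on $\pDG\ba\b$, equal to $\smin(\b)$. Writing $s=\smin(\b)$ for this common value, I claim this constancy is the decisive simplification. Inspecting the $K=2$ burst map~\eqref{eq:map2}, the \emph{only} source of nonlinearity in $x$ is the factor $\sstar\b(x)$; once we restrict attention to $\pDG\ba\b$, this factor is frozen at $s$, so the map $\G\ba\b$ becomes \emph{affine} in $x$. Computing its contraction modulus then reduces to reading off a single scalar, rather than estimating a genuinely nonlinear map.

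Next I would compute the action on a difference vector. Given $x,\widetilde x\in\pDG\ba\b$, both lie in $\D\ba$, so $x_{0,m}+x_{1,m}=\a_m=\widetilde x_{0,m}+\widetilde x_{1,m}$; setting $\d_m:=x_{1,m}-\widetilde x_{1,m}$ yields $x_{0,m}-\widetilde x_{0,m}=-\d_m$, while the defining constraint of $\pDG\ba\b$ forces $\sum_m\d_m=0$. Substituting into~\eqref{eq:map2} with the common value $s$, the $\alpha_m$ and the exponential prefactors line up between the two evaluations, and I expect the difference to collapse to
\begin{equation*}
  \Gcoord\ba\b1m(x)-\Gcoord\ba\b1m(\widetilde x)=e^{-\b s}(1-\b s)\,\d_m,
\end{equation*}
with $\Gcoord\ba\b0m$ contributing exactly the negative of this. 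Thus on $\pDG\ba\b$ the map multiplies the entire difference, componentwise, by the single scalar $e^{-\b s}(1-\b s)$.

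Since both the input difference $x-\widetilde x$ and the output difference are scalar multiples of the same vector $\{\d_m\}_m$, the ratio of norms is \emph{exact}: $\norm{\G\ba\b(x)-\G\ba\b(\widetilde x)}=e^{-\b s}\av{\b s-1}\,\norm{x-\widetilde x}$. (This is an equality, and it is independent of $M$—stronger than what is asked.) The contraction modulus is therefore the explicit scalar $h(\b):=e^{-\b\smin(\b)}\av{\b\smin(\b)-1}$, and it remains only to show $h(\b)\to0$. For this I would invoke Proposition~\ref{prop:sstar} once more: $\smin(\b)\to1$, so $u:=\b\smin(\b)\to\infty$, and since $e^{-u}(u-1)\to0$ as $u\to\infty$ we get $h(\b)\to0$. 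Given $\d>0$ one then picks $\b_1(M,\d)$ (in fact independent of $M$) with $h(\b)<\d$ for $\b>\b_1$, which is the claim.

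The step I would be most careful with—and the only genuinely nontrivial one—is the algebraic cancellation in the second paragraph that produces the clean factor $e^{-\b s}(1-\b s)$; once the constancy of $\sstar\b$ on $\pDG\ba\b$ has reduced $\G\ba\b$ to an affine map, everything else is bookkeeping. It is worth noting how sharply this contrasts with the flow estimate of Theorem~\ref{thm:flow}: there the nonlinearity through $\t$ forced the full Jacobian analysis of $\mM_M$, whereas here the nonlinearity simply disappears on the relevant boundary set, leaving a one-dimensional computation.
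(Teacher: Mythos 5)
Your proposal is correct and is essentially the paper's own argument: both proofs rest on the observation (via Proposition~\ref{prop:sstar}, equivalently $\nabla_{\boldsymbol\e}\psimap\b=0$ for zero-sum perturbations) that $\sstar\b$ is frozen at $\smin(\b)$ on $\pDG\ba\b$, so that $\G\ba\b$ acts on differences of points of $\pDG\ba\b$ by multiplication by a single scalar, which tends to zero as $\b\to\infty$ because $\b\smin(\b)\to\infty$. The only discrepancy is the scalar itself: you obtain $e^{-\b\smin(\b)}\av{1-\b\smin(\b)}$, which is what~\eqref{eq:map2} as written actually gives, whereas the paper's proof restates~\eqref{eq:map2} with a sign slip ($-x_{1,m}$ in place of $+x_{1,m}$) and lands on $e^{-\b\smin(\b)}(1+\b\smin(\b))$; since both factors are less than one and tend to zero, the conclusion is unaffected either way.
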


\begin{proof}
  Let us define the vector $\beps$ by
  \begin{equation*}
    \e_m = \widetilde{x}_m - x_m.
  \end{equation*}

Since $x,\widetilde{x}$ are both in $\pDG\ba\b$, we have
$\boldsymbol\e\perp\1$.  It follows from~\eqref{eq:psisimple} that
$\nabla_{\boldsymbol\e} \psimap\b(s,x)=0$.  Recall
from~\eqref{eq:map2} that
  \begin{equation*}
    \Gcoord\ba\b1m(x) = e^{-\b \sstar\b(x)}(\b \sstar\b(x) x_{0,m} - x_{1,m}),
  \end{equation*}
  and thus
  \begin{align*}
    \nabla_{\e}\Gcoord\ba\b1m(x)
    &= e^{-\b s^*(x)}\left(-\b \nabla_{\boldsymbol{\e}}\sstar\b(x)\right)(\b \sstar\b(x) x_{0,m} - x_{1,m}) + e^{-\b \sstar\b(x)}\left(\b \nabla_{\boldsymbol{\e}}x_{0,m} - \nabla_{\boldsymbol{\e}}x_{1,m}\r)\\
      &=  e^{-\b \sstar\b(x)}(\b \sstar\b(x)(-1)-1),
\end{align*}
so
\begin{equation*}
  {\nabla_{\boldsymbol{\e}}\G\ba\b(x)} = -(e^{-\b \sstar\b(x)}(\b \sstar\b(x)+1))\1.
\end{equation*}
Note that Proposition~\ref{prop:sstar} implies that
$\b\sstar\b(x)\to\infty$ as $\b\to\infty$ for any $x$.  If we define
the function $g(z) = e^{-z}(1+z)$, then it is easy to see that
\begin{equation*}
  0 < g(z) < 1\mbox{ for }z\in(0,\infty),\quad \lim_{z\to\infty}g(z).
\end{equation*}

From this and the fundamental theorem of calculus, the result follows.
\end{proof}

\subsection{Proof of Main Theorem}

Finally, to prove the theorem, we will show that the 

\begin{definition}

We define
\begin{align*}
\HH\ba\br\b\colon \DL\ba\b&\to \DL\ba\b\\
  x&\mapsto  \G\ba\b(e^{\t\mcL}x),
\end{align*}
where $\t$ is the first hitting time defined in~\eqref{eq:defoftau1}.
\end{definition}

{\bf Proof of Theorem~\ref{thm:main}.}  If we consider any solution of
the hybrid system $\hds2\ba\br\b(t)$ that has infinitely many big
bursts, then it is clear from chasing definitions  that 
\begin{equation*}
  \hds\ba\br\b(\t_n) = \left(\HH\ba\br\b\r)^n\hds\ba\br\b(0).
\end{equation*}

$\HH\ba\br\b$ is the composition of two maps, one coming from a
stopped flow and the other coming from the map $G$.  It follows from
Theorem~\ref{thm:flow} that the modulus of contraction of the stopped
flow is no more than $1+\sqrt{M}/2$ on the set $\F\ba$ whenever
$\b>2$.  It follows from Theorem~\ref{thm:map} that we can make the
modulus of the second flow less than $\d$ by choosing $\b>\b_1(M,\d)$.
Let us define
\begin{equation*}
  \b_M := \b_1\left(M,\frac{1}{1+\sqrt{M}/2}\right),
\end{equation*}
and then by composition it follows that $\HH\ba\br\b$ is a strict
contraction on $\F\ba$.  From Lemma~\ref{lem:F}, it follows that
$\DL\ba\b$ is mapped into $\F\ba$ in a finite number of iterations, so
that $\HH\ba\br\b$ is eventually strictly contracting on $\DL\ba\b$,
and therefore $\HH\ba\br\b$ has a globally attracting fixed point,
which means that the hybrid system has a globally attracting limit
cycle.

Finally, we want to understand the asymptotics as $M\to\infty$.
Choose any $0<\g_1,\g_2<1$.  By Proposition~\ref{prop:sstar},
$\b\smin\b > \g_1\b$ for $\b$ sufficiently large, and it is clear that
$e^{-z}(z+1) < e^{-\g_2 z}$ for $z$ sufficiently large.  From these it
follows that for $\b$ sufficiently large,
\begin{equation*}
  e^{-\b\smin(\b)}(\b\smin(\b)+1)<e^{-\g_1\g_2\b}.
\end{equation*}
From this we have that $\b_M < \ln(1+\sqrt{M}/2)/\g_1\g_2$ and the
result follows.

\hfill$\square$

We have show that $\b_M$ is finite and determined its asymptotic
scaling as $M\to\infty$.  It was shown in~\cite{MMNP-DPS} that $\b_1 =
2$, and we can now show that this is the case as well for $M=2$, i.e.

\begin{proposition}
$\b_2 =2$.
\end{proposition}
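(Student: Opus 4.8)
The plan is to exploit the fact that when $M=2$ the zero-sum subspace $\1_2^\perp$ is only one-dimensional, so the crude estimate $1+\sqrt 2/2$ of Theorem~\ref{thm:flow} can be replaced by an exact, and in fact contractive, computation. First I would observe that $\1_2^\perp = \mathrm{span}\{(1,-1)^\intercal\}$ and apply the Jacobian $\mM_2$ from~\eqref{eq:defofM} directly to $(1,-1)^\intercal$. Writing $a=e^{-2\rho_1\t}$ and $b=e^{-2\rho_2\t}$ and using that the weights in~\eqref{eqn:defineC} satisfy $c_1+c_2=1$, a one-line computation gives $\mM_2(1,-1)^\intercal=\lambda\,(1,-1)^\intercal$ with $\lambda=(1-c_1)a+c_1 b$. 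On $\F\ba$ we have $c_1\in(0,1)$ (as already noted after~\eqref{eqn:defineC}) and $a,b\in(0,1)$ since $\t>0$; hence $\lambda$ is a convex combination of two numbers in $(0,1)$ and satisfies $0<\lambda<1$. Thus, for $M=2$, the stopped flow is a genuine contraction on $\1_2^\perp$, not merely a map of norm at most $1+\sqrt 2/2$.

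Next I would combine this with the contraction estimate for the big-burst map. By Proposition~\ref{prop:sstar} the quantity $\sstar\b$ is the constant $\smin(\b)$ on $\pDG\ba\b$, and the computation in the proof of Theorem~\ref{thm:map} shows that $\G\ba\b$ contracts zero-sum perturbations by exactly $g(\b\smin(\b))$, where $g(z)=e^{-z}(1+z)$. For every $\b>2$ we have $\smin(\b)>0$, so $g(\b\smin(\b))<1$. Since the return map $\HH\ba\br\b$ is the composition of the stopped flow with $\G\ba\b$, its Jacobian on the one-dimensional space $\1_2^\perp$ is multiplication by $-\lambda\, g(\b\smin(\b))$, whose modulus is at most $g(\b\smin(\b))<1$ because $\lambda<1$. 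As $g(\b\smin(\b))$ is a fixed constant that does not depend on the base point, this bound is uniform, so $\HH\ba\br\b$ is a strict contraction on $\F\ba$ for \emph{every} $\b>2$. Feeding this into the argument of Theorem~\ref{thm:main} (using Lemma~\ref{lem:ibb} and Lemma~\ref{lem:F} exactly as before) produces a globally attracting limit cycle whenever $\b>2$, which is the bound $\b_2\le 2$.

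For the reverse inequality I would show that no limit cycle can exist when $\b\le 2$. By Proposition~\ref{prop:sstar}, $\smin(\b)=0$ for $\b\le 2$, so the map $\G\ba\b$ reduces to the identity and every big burst has size zero. A trajectory starting in $\DL\ba\b$ then evolves purely under the flow~\eqref{eq:ODE2}, which has the globally attracting fixed point $\xeq\ba\b$ with $y_1=\frac12\le\b^{-1}$; the trajectory therefore converges to $\xeq\ba\b\in\overline{\DL\ba\b}$ and cannot sustain the infinitely many positive-size big bursts that a genuine limit cycle would require. Hence $\b_2\ge 2$, and together with the previous paragraph this gives $\b_2=2$.

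The one substantive step, and the only place where anything beyond bookkeeping happens, is the exact diagonalization of $\mM_2$ on $\1_2^\perp$: recognizing $(1,-1)^\intercal$ as an eigenvector and watching its eigenvalue collapse to the convex combination $(1-c_1)a+c_1b$. This is precisely what upgrades the Gershgorin/rank-one estimate of Theorem~\ref{thm:MatrixNormBound} to a bona fide contraction, and it is special to $M=2$, where $\1_M^\perp$ is one-dimensional; for $M\ge 3$ the restricted flow need not contract and the factor $1+\sqrt M/2$ cannot in general be pushed below one.
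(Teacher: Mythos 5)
Your proposal is correct and takes essentially the same route as the paper: the paper's own proof also exploits that $\1_2^\perp$ is spanned by $(1,-1)^\intercal$, applies $\mM_2$ to this vector, and uses $c_1+c_2=1$, $c_m>0$ on $\F\ba$, and $e^{-2\rho_m\t}<1$ to conclude $\norm{\mM_2|_{\1^\perp}}_2<1$, which combined with the strict contraction of $\G\ba\b$ (modulus $g(\b\smin(\b))<1$ for every $\b>2$) yields the claim. Your convex-combination phrasing of the eigenvalue $c_2e^{-2\rho_1\t}+c_1e^{-2\rho_2\t}$ and your explicit verification that no limit cycle can exist for $\b\le2$ (which the paper leaves implicit) are only minor streamlinings of the same argument.
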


\begin{proof}
  Using the previous (much more general) results, we have that
  $\norm{\mM_2|_{\1^\perp}}_2 < 3/2$.  This tells us that choosing
  $\b$ large enough that $e^{-\b\smin(\b)}(\b\smin(\b)+1) < 2/3$ is
  good enough to guarantee a contraction.  Numerical approximation
  gives a value of $\b\approx 2.48$ that will guarantee this.  In
  fact, we will go further, and show that for $M=2$, we have
  $\norm{\mM_2|_{\1^\perp}}_2 < 1$ and this will be enough to
  establish that $\b_2=2$.

\newcommand{\vv}{\mathbf{v}}

  In $\R^2$, $\1^\perp$ is a one-dimensional space spanned by
  $(1,-1)^\intercal$, and thus we need only compute the eigenvalue
  associated to this vector.  If we define $\vv =
  \mM_2\cdot(1,-1)^\intercal$ and show $\av{v_1-v_2} < 2$, then we
  have established the result.  When $M=2$, we can
  write~\eqref{eq:defofM} as
  \begin{equation}
    \mM_2
    =
    \left(\begin{array}{cc} e^{-2\rho_1\t}-c_1e^{-2\rho_1\t}&-c_1e^{-2\rho_2\t} \\ -c_2e^{-2\rho_1\t}& e^{-2\rho_2\t}-c_2e^{-2\rho_2\t}\end{array}\right),
  \end{equation}
  and thus 
\begin{equation*}
  \vv = \left(\begin{array}{cc} e^{-2\rho_1\t} - c_1 e^{-2\rho_1\t} + c_1 e^{-2\rho_2\t} \\ -c_2e^{-2\rho_1\t} - e^{-2\rho_2\t}+c_2e^{-2\rho_2\t} \\\end{array}\right).
\end{equation*}
Thus 
\begin{equation*}
  v_1-v_2 = e^{-2\rho_1\t}(1-c_1+c_2)+e^{-2\rho_2\t}(1+c_1-c_2).
\end{equation*}
Using $c_1 + c_2 = 1$, this simplifies to 
\begin{equation*}
  v_1-v_2 = 2c_2e^{-2\rho_1\t}+2c_1e^{-2\rho_2\t}.
\end{equation*}
Since it is clear that $v_1-v_2>0$, we need to show that $v_1-v_2 < 2$, or
\begin{equation*}
  2c_1 e^{2\rho_1\t} + 2c_2e^{2\rho_2\t} < 2e^{(\rho_1+\rho_2)\t}.
\end{equation*}
Writing $A = \rho_1 (\a_1/2 - x_{1,1}(0))$, $B =   \rho_2 (\a_2/2 - x_{1,2}(0))$, this becomes
\begin{equation}
\frac{A+B}{Ae^{2\rho_2 \t} + Be^{2\rho_1\t}} < 1,
\end{equation}
but this is clear since $e^{2\rho_1\t},e^{2\rho_2\t}>1$.
\end{proof}


\begin{remark}
  We conjecture from numerical evidence
  (cf.~Figure~\ref{fig:PhaseDiagram}) that, in fact, $\beta_M = 2$ for
  all $M$.  The techniques used in this paper cannot prove this,
  however.
\end{remark}

\section{Numerical simulations}

In this section, we will first present a numerical simulation of the
mean field system and compare to the full stochastic system.  We
verify the existence of a unique attracting periodic orbit for $K=2$,
as proven above.  Finally, we show numerically that this unique
attractor exists, at least for some parameter values, for $K>2$.

\subsection{ Mean field}

We first numerically solve the hybrid ODE-mapping system, with $M=3$
and random $\alpha_i, \rho_i$.  The ODE portion of the hybrid system
can be solved explicitly, and we use MATLAB's {\texttt{fsolve}} to
determine the hitting times $\tau_i$.  We plot the results for $\beta
=2.1$, $\beta = 2.5$ for a single initial condition in Figure
\ref{fig:ThreePopulations}.  We observe that each neuron population is
attracted to a periodic orbit after several bursts.

\begin{figure} [ht]
\begin{center}
\begin{tabular}{ c c}
\includegraphics[width=0.5 \columnwidth]{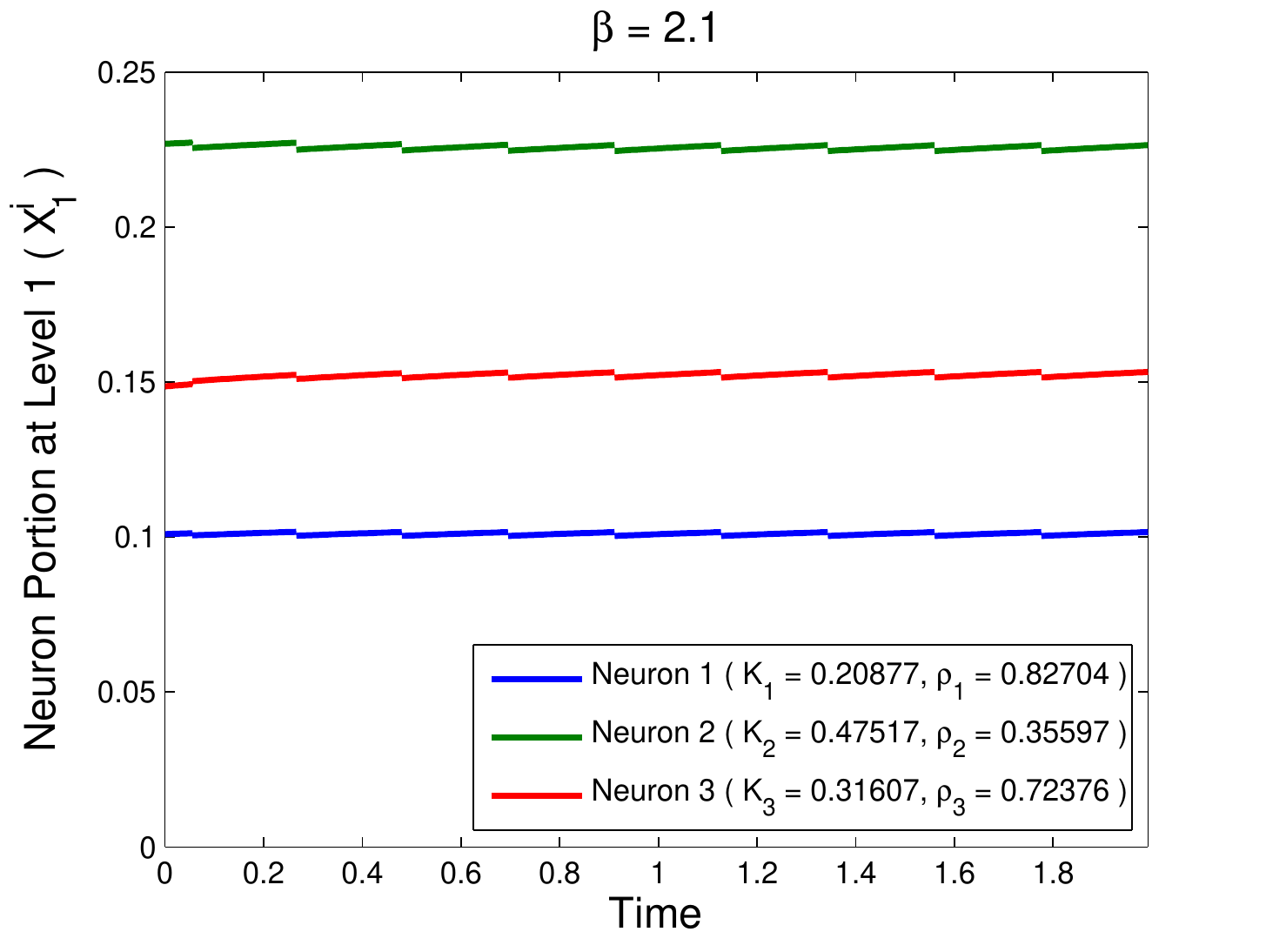} & \includegraphics[width=0.5 \columnwidth]{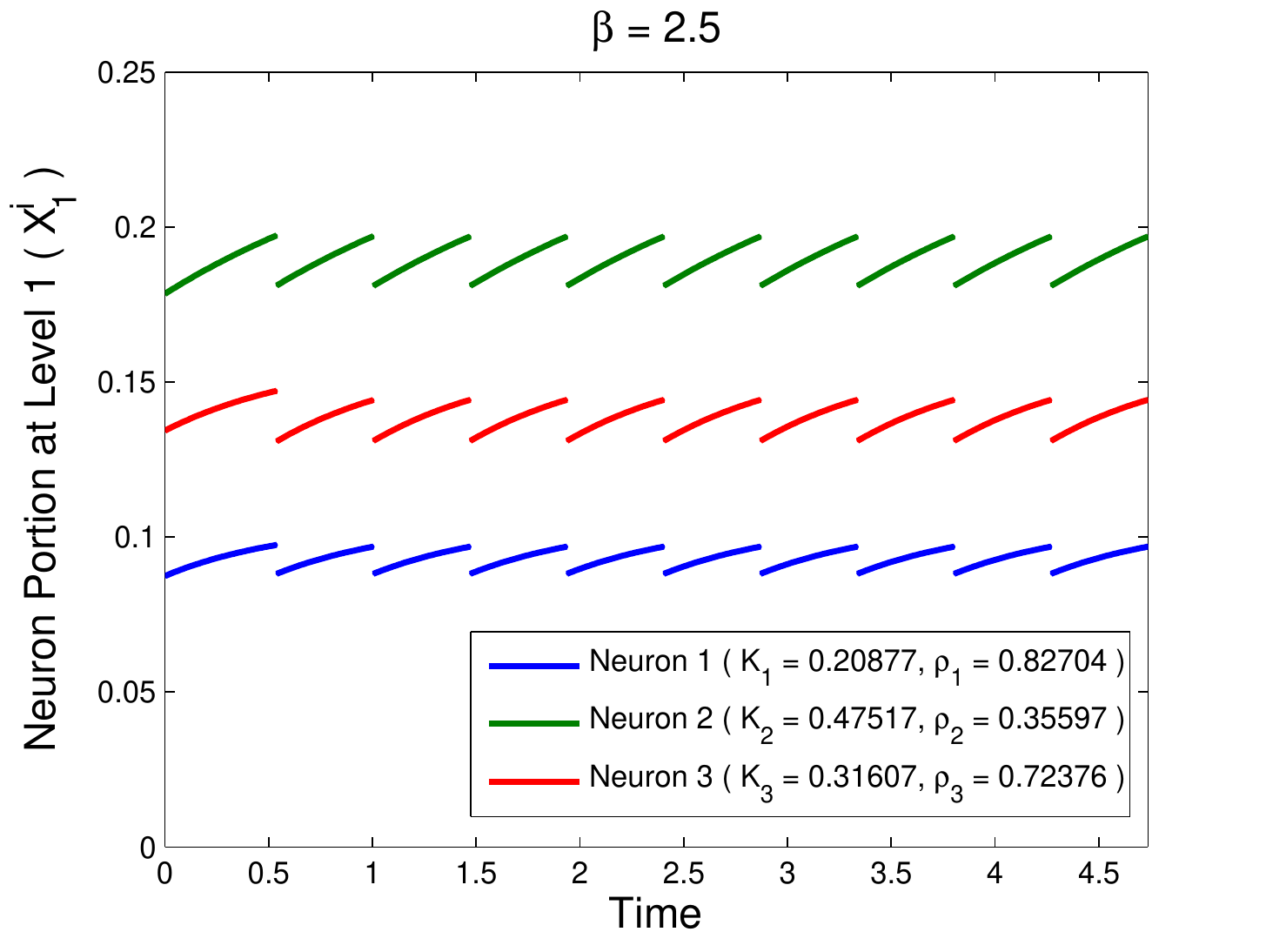}
\end{tabular}
\end{center}
\caption{Plots of the hybrid ODE-mapping system numerical simulation
  results with $\beta=2.1$ (left) and $\beta=2.5$ (right). Both of
  them are with three neuron populations. The neuron portions at
  energy level $1$ over simulation time are shown in the plots.}
\label{fig:ThreePopulations}
\end{figure}

To further demonstrate convergence, we also plot trajectories for the
same parameters for various initial conditions in
Figure~\ref{fig:ThreePopulationsConvergence}.  We see that three to
four bursts, he trajectories converge to the same periodic orbit.

\begin{figure} [ht]
\begin{center}
\begin{tabular}{ c c}
\includegraphics[width=0.5 \columnwidth]{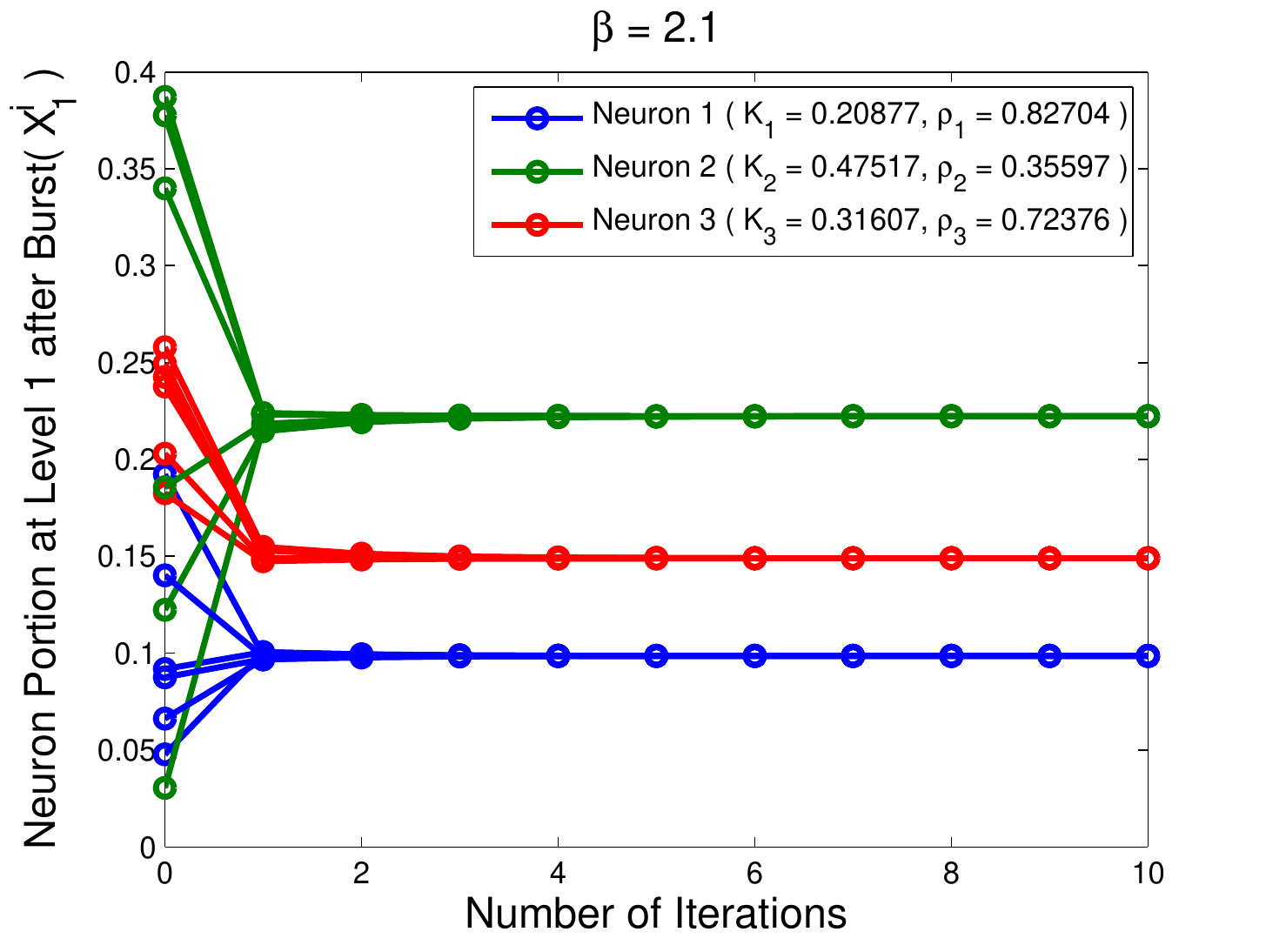} & \includegraphics[width=0.5 \columnwidth]{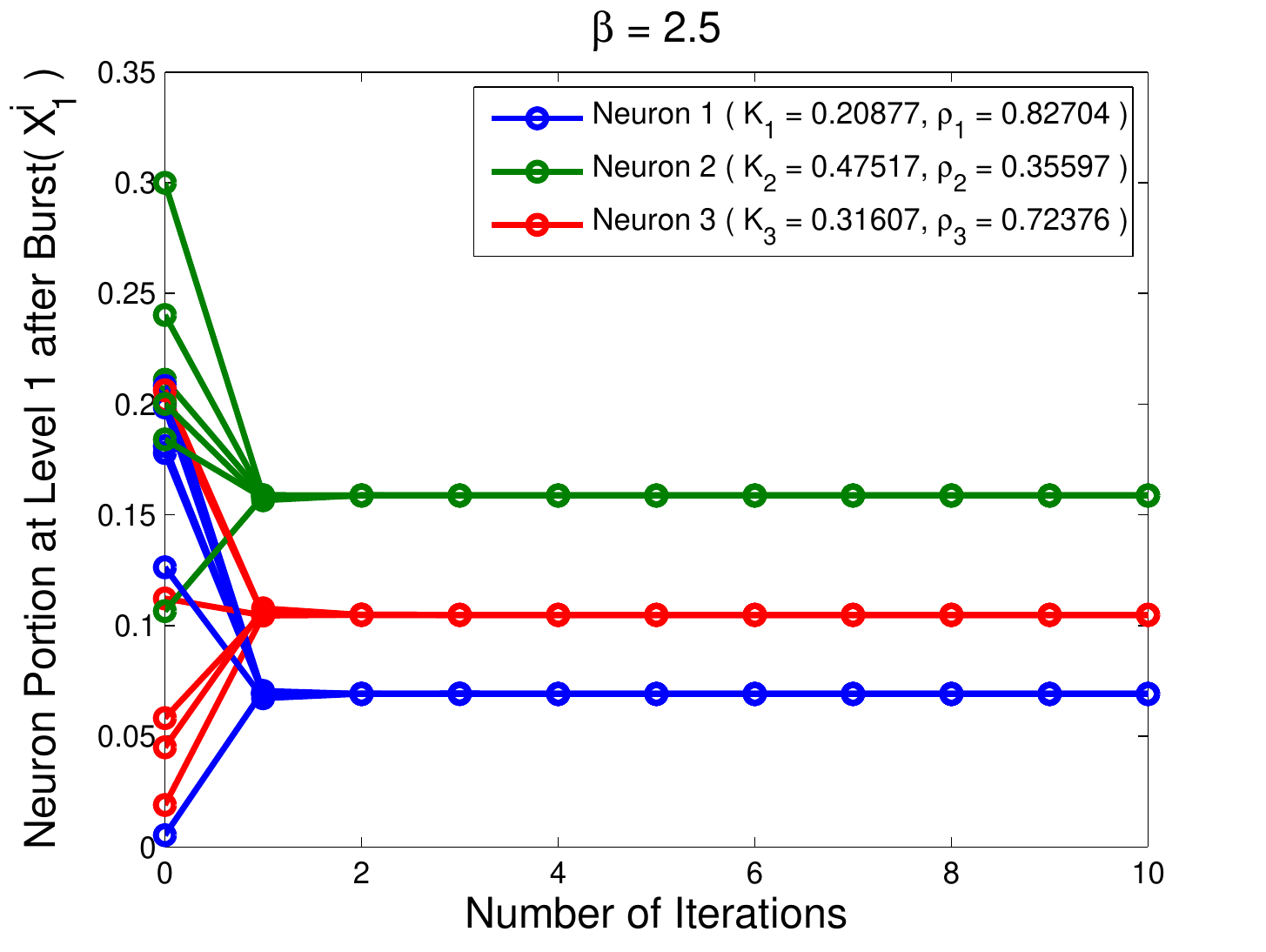}
\end{tabular}
\end{center}
\caption{Plots of neuron proportions after each burst iteration with
  $\beta=2.1$ (left) and $\beta=2.5$ (right). Both subfigures are for
  $M=3$.  For all initial conditions, the population seems to converge
  after about four bursts.}
\label{fig:ThreePopulationsConvergence}
\end{figure}

We also study the phase diagram for different $M$, with $\beta$ over
the range $[2.005, 2.5]$.  In the results above, we have only showed
that the system converges to the attractor for $\beta>\beta_M$, where
$\beta_M$ might be larger than 2.  The numerical evidence
in~\ref{fig:PhaseDiagram} suggests that $\beta_M$ might in fact be 2
in general; what we did was choose $1000$ initial conditions at
random, and plotted the proportion that fell into each of three
categories: those that converged monotonically to a periodic orbit,
those that converged non-monotonically to the periodic orbit, and
finally, those that did not converge to the periodic orbit.  (By
converge monotonically, what we mean is that each successive iteration
applied to the initial condition was monotonically convergent to the
limit and did not overshoot; by non-monotone we mean that the
iterations overshot the fixed point.)  The third case was always
empty, and the only distinction was whether the convergence was
monotone or not.

\begin{figure}[ht]
\centering
\includegraphics[width=0.45\textwidth]{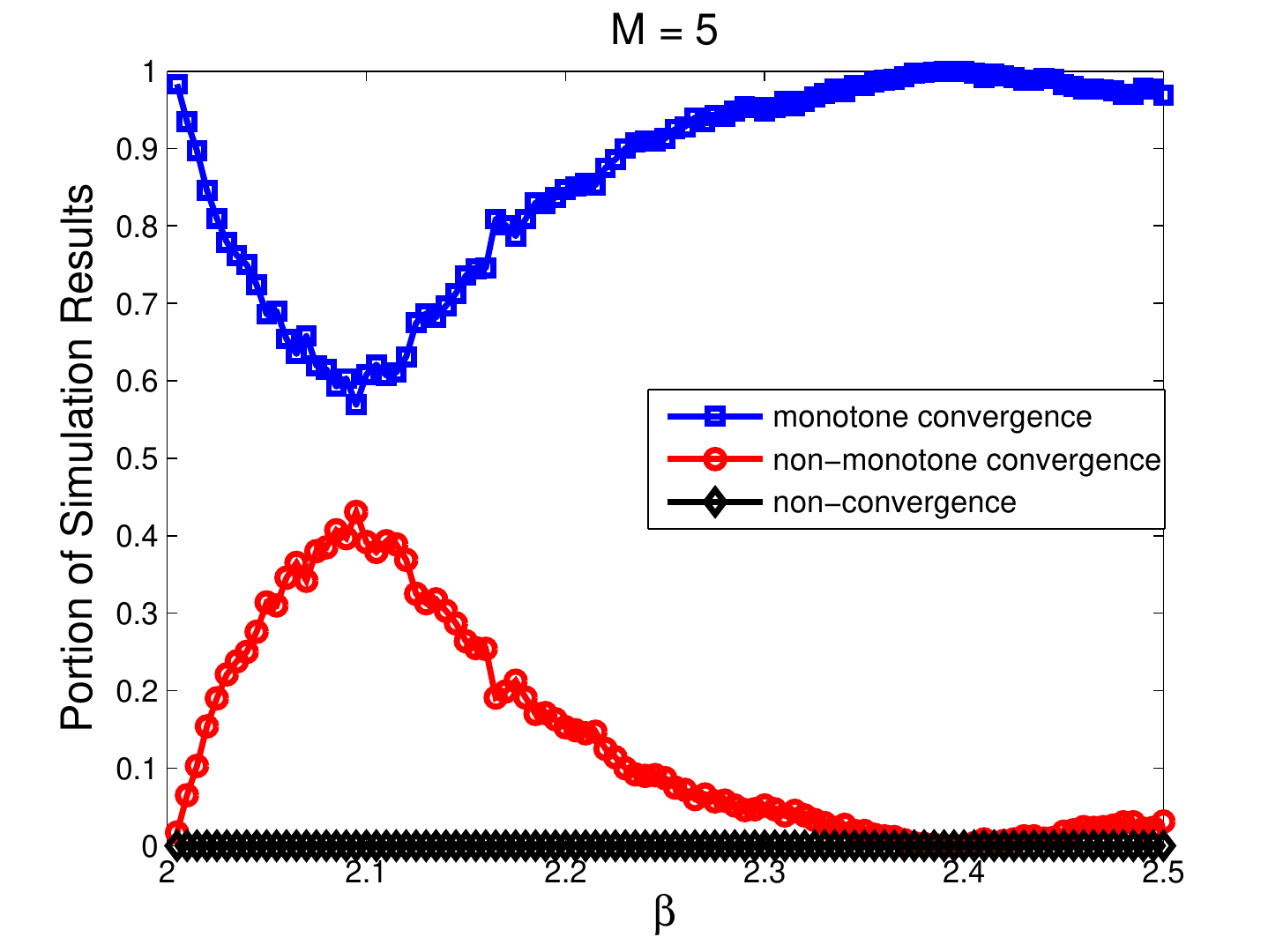}%
\includegraphics[width=0.45\textwidth]{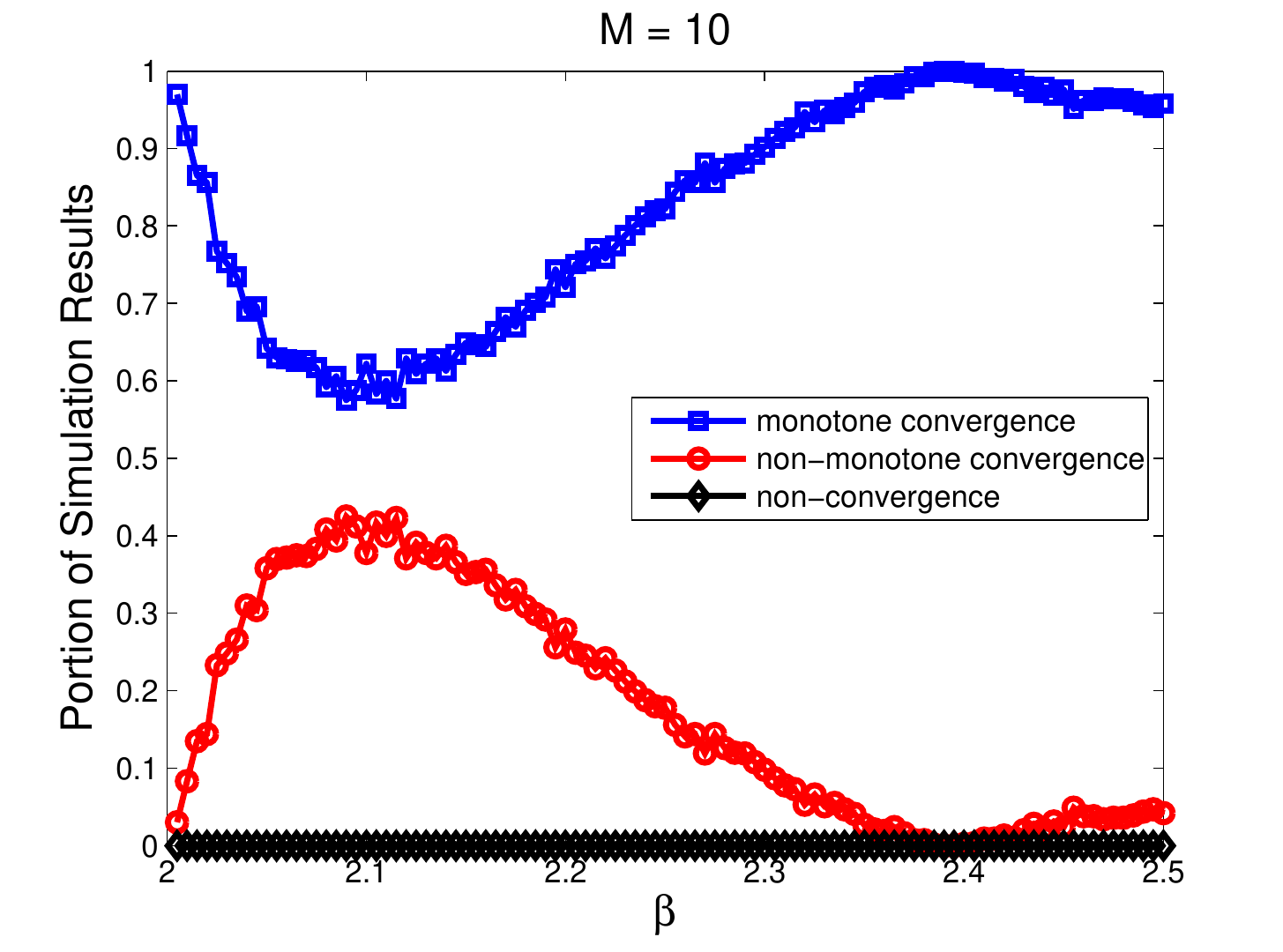}
\caption{Phase diagrams for $M=5$ and $M=10$ subpopulations. The
  parameters $K$ and $\rho$ are chosen at random. For each $\beta,$ we
  choose $10000$ initial conditions uniformly in the simplex, and
  determine which proportion falls into each of three categories:
  monotone convergent, non-monotone convergent and non-convergent. We
  vary $\beta$ from $2.005$ to $2.5$.}
\label{fig:PhaseDiagram}
\end{figure}

\section{Conclusion}

We extended the results of~\cite{BMB-DP, MMNP-DPS} to the case of
multiple subpopulations with different intrinsic firing rates.  We
were able to show that the stochastic neuronal network converges to a
mean-field limit in general.  We further analyzed the limiting mean
field in the case where each neuron has at most two inactive states,
and proved that for sufficiently large coupling parameters, the
mean-field limit has a globally attracting limit cycle. A natural next
question to ask is what happens when the system has more inactive
states, although the analysis of this higher-dimensional hybrid system
is likely to be more difficult (in analogy to the single firing rate
case of~\cite{MMNP-DPS}, where the analysis of the mean-field limit
was difficult when each neuron had many inactive states).

\section*{Acknowledgments}

Y.Z. was partially supported by the National Science Foundation
Graduate Research Fellowship under Grant No. 1122374.  L.D. was
supported by the National Science Foundation under grants CMG-0934491
and UBM-1129198 and by the National Aeronautics and Space
Administration under grant NASA-NNA13AA91A.


\end{document}